\newcommand{\N}{{\mathbb{N}}}
\newcommand{\Z}{{\mathbb{Z}}}
\newcommand{\uloopr}[1]{\ar@'{@+{[0,0]+(-4,5)}@+{[0,0]+(0,10)}@+{[0,0] +(4,5)}}^{#1}}
\newcommand{\uloopd}[1]{\ar@'{@+{[0,0]+(5,4)}@+{[0,0]+(10,0)}@+{[0,0]+ (5,-4)}}^{#1}}
\newcommand{\dloopr}[1]{\ar@'{@+{[0,0]+(-4,-5)}@+{[0,0]+(0,-10)}@+{[0, 0]+(4,-5)}}_{#1}}
\newcommand{\dloopd}[1]{\ar@'{@+{[0,0]+(-5,4)}@+{[0,0]+(-10,0)}@+{[0,0 ]+(-5,-4)}}_{#1}}
\newcommand{\luloop}[1]{\ar@'{@+{[0,0]+(-8,2)}@+{[0,0]+(-10,10)}@+{[0, 0]+(2,2)}}^{#1}}
\newtheorem{lem}{Lemma}[section]
\newtheorem{corol}[lem]{Corollary}
\newtheorem{theorem}[lem]{Theorem}
\newtheorem{proposition}[lem]{Proposition}
\newtheorem{remark}[lem]{Remark}
\newtheorem{definition}[lem]{Definition}
\newtheorem{example}[lem]{Example}
\begin{document}

\title[The socle series of a Leavitt path algebra]{The socle series of a Leavitt path algebra}
\author{Gene Abrams}\author{Kulumani M. Rangaswamy}
\address{Department of Mathematics, University of Colorado,
Colorado Springs CO 80933 U.S.A.} \email{abrams@math.uccs.edu}\email{krangasw@uccs.edu}
\author{Mercedes Siles Molina}
\address{Departamento de Algebra, Geometr\'ia, y Topolog\'ia, Universidad de M\'alaga,
29071 M\'alaga, Spain.}\email{msilesm@uma.es}
\thanks{The first  author is partially supported by the U.S. National Security Agency under grant number H89230-09-1-0066.   The
 third author is partially supported by the Spanish MEC and Fondos FEDER through project MTM2007-60333, and jointly by the Junta de Andaluc\'{\i}a and Fondos FEDER through projects FQM-336 and FQM-2467.  This work was initiated while the third author was hosted as a Research Visitor by the Department of Mathematics at the University of Colorado at Colorado Springs. She thanks this host center for its warm hospitality and support.} \subjclass[2000]{Primary: 16S99, 16D60, 46L89; \ \ Secondary:  16W50} \keywords{Leavitt path algebra,
ascending Loewy socle series}

\begin{abstract}
We investigate the ascending Loewy socle series of Leavitt path algebras $L_K(E)$ for an arbitrary graph $E$ and field $K$.   We  classify those graphs $E$ for which $L_K(E)=S_{\lambda}$ for some element $S_{\lambda}$ of the Loewy socle series. We then show that for any ordinal $\lambda$ there exists a graph $E$ so that the Loewy length of $L_K(E)$ is $\lambda$.  Moreover, $\lambda \leq \omega $ (the first infinite ordinal)  if $E$ is a row-finite graph.
\end{abstract}

\maketitle

Graph $C^*$-algebras comprise an important class of $C^{\ast }$%
-algebras.  These  have been extensively studied over the past ten years
(see e.g. \cite{R}), and include as specific examples the Cuntz-Krieger algebras, the
Toeplitz algebra, and many of the AF-algebras. Recently the investigation of the algebraic aspects of the analogues
of these $C^{\ast }$-algebras, the {\it Leavitt path algebras} $L_K(E)$, have
become an active topic of research (see e.g. \cite{AA1}, \cite{AA2}, \cite{AA3},
\cite{AMP}, \cite{ABGM1}, \cite{ABGM2}, and \cite{APM}, among others).  This note, in which we study the ascending Loewy socle
series of $L_K(E)$ (see Definition \ref{socleseriesdefinition}), is one more contribution in
the study of the algebraic structure of Leavitt path algebras
over an arbitrary graph $E$ and arbitrary field $K$. Unlike many of the earlier articles written on the topic of Leavitt path algebras, here an {\it arbitrary} graph $E$ means there are neither  cardinality restrictions nor
graph-theoretical restrictions (e.g. row-finiteness)  imposed on  $E$.

W. Krull (\cite{K1}, \cite{K2}) was the first to develop the interrelationships between the socle series of a ring and its ideal structure; this was achieved in the context  of Noetherian rings.
Subsequently, in the case of rings satisfying minimum conditions modulo the Jacobson
radical, the socle series turned out to be quite useful (as shown, for example, by
Artin-Nesbitt-Thrall \cite{ANT} and Dauns \cite{D}).

Our goal in this article is to describe the socle series of a Leavitt path algebra $L_K(E)$ in terms of the graph $E$.   Specifically, we achieve two main results:   we give in Theorem \ref{Loewyring} an explicit description of those graphs $E$ for which the socle series terminates at $L_K(E)$ (following \cite{CF}, we call such a ring a {\it Loewy ring}); and in Theorem \ref{realizationtheorem} we construct, for each ordinal $\lambda$, a graph $E$ for which $L_K(E)$ is a Loewy ring of length $\lambda$.   In contrast, we show in Theorem \ref{rowfinitegiveslengthleqomega} that if $E$ is a  row-finite graph  then  the Loewy length of $L_K(E)$ must be finite or  $\omega$ (the first countable ordinal).   Further, we show that if $L_K(E)$ is a Loewy ring then necessarily $E$ is an acyclic graph, but not conversely. In particular, we conclude that, for any graph $E$, if $L_K(E)$ is a Loewy ring then $L_K(E)$ must be von Neumann regular.

\section{Basic concepts: Loewy rings, and the socle of $L_K(E)$}\label{basicsection}

In this first section we describe the ascending Loewy  socle series of an arbitrary ring, and then study this series in the context of Leavitt path algebras.   Along the way we provide a description of the socle of $L_K(E)$ for an arbitrary graph $E$.

The Loewy series of a module $M$ over a ring $R$ is defined in terms of
its socle. Recall that the \textit{socle} $Soc(M)$ of an $R$-module $M$\ is the
sum of all its simple submodules (where $Soc(M)$ is defined to be $\{0\}$ in case $M$ has no simple
submodules). In particular, if we consider $R$ as a left $R$-module, then the \textit{left
socle }$Soc_{l}(R)$ is the sum of all the simple left ideals of $R$. Similarly, one
defines the \textit{right socle} $Soc_{r}(R)$ of $R$. It is straightforward to
verify that both $Soc_{l}(R)$ and $Soc_{r}(R)$ are two-sided ideals of $R$. In
general the left socle of $R$ need not equal its right socle (for instance, $Soc_{l}(R) \neq Soc_{r}(R)$ for $R$ the ring of $n \times n$ upper triangular matrices over a field, $n\geq 2$).
However, for rings $R$ whose Jacobson radical is zero, it is known (see e.g. \cite{J}) that $Soc_{l}(R)=Soc_{r}(R)$; in this situation,
we shall denote the socle simply by $Soc(R)$.

The Jacobson radical $J(L_K(E))$ is shown to be $\{0\}$ in \cite{ABGM1} and \cite{ABGM2} for row-finite or countable graphs $E$.  As we shall demonstrate in Proposition \ref{zeroradicalingeneral}, in fact $J(L_K(E)) = \{0\}$ for any arbitrary graph $E$.  In particular, we get $Soc_{l}(L_K(E))=Soc_{r}(L_K(E))$.   (We note that the equality  $Soc_{l}(L_K(E))=Soc_{r}(L_K(E))$ will also follow from the internal description of the socle given in Theorem \ref{soclelinepoints}.)

\begin{definition}\label{socleseriesdefinition}

{\rm  Let $R$ be any (not necessarily unital) ring for which $J(R)=\{0\}$, and let $\tau=2^{|R|}$.    The {\it
ascending Loewy socle series} of $R$ is a well-ordered ascending chain of
two sided ideals
$$ 0=S_{0}<S_{1}<\cdot \cdot \cdot <S_{\alpha }<S_{\alpha +1}<\cdot \cdot \cdot
\qquad \qquad (\alpha <\tau )$$
where, for each $\alpha <\tau $,
$$S_{\alpha +1}/S_{\alpha }=Soc(R/S_{\alpha }) \ \  \mbox{if }\gamma = \alpha+1 \mbox{ is not a limit ordinal, and }$$
$$S_{\gamma }=\cup
_{\alpha <\gamma }S_{\alpha } \ \ \mbox{if } \gamma \mbox{ is a limit ordinal.}$$

\smallskip

\noindent
For each $\alpha < \tau$, $S_{\alpha }$ is called the {\it $\alpha$-th  socle of }$R$. The least ordinal $\lambda $ for which $S_\lambda = S_{\lambda + 1}$ is called the {\it  Loewy length} $\ell(R)$ {\it of} $R$.
We call $R$ a {\it Loewy ring} in case $R=S_{\alpha}$ for some $\alpha$.
We often refer to the  ascending Loewy socle series of a ring simply as its {\it socle series}.

}
\end{definition}

Using left (resp., right) socles one can define the notion of a left (resp., right)
Loewy ring in the analogous manner.

For a unital ring $R$ it is
straightforward to show that $R$ is a left Loewy ring if and only if every
nonzero left $R$-module has a nonzero socle. Rings with this
property have been studied under the name {\it left semiartinian rings} (see e.g. \cite{Bac}).

\smallskip

We now review the appropriate graph- and algebra-theoretic terminology used in this article.  For additional information about these ideas, see e.g. \cite{AA1}, \cite{R}, and \cite{T}.

A \emph{(directed) graph} $E=(E^{0},E^{1},r,s)$ consists of two
sets $E^{0},E^{1}$ and maps $r,s:E^{1}\rightarrow E^{0}$. (The sets $E^{0}$
and $E^{1}$ are allowed to be of arbitrary cardinality.) The elements of $%
E^{0}$ are called \emph{vertices} and the elements of $E^{1}$ \emph{edges}.
A \emph{path} $\mu $ in a graph $E$ is a sequence of edges $\mu =e_{1}\dots
e_{n}$ such that $r(e_{i})=s(e_{i+1})$ for $i=1,\dots ,n-1$. In this case, $%
s(\mu ):=s(e_{1})$ is the \emph{source} of $\mu $, $r(\mu ):=r(e_{n})$ is
the \emph{range} of $\mu $, and $n$ is the \emph{length} of $\mu $. We view
the elements of $E^{0}$ as paths of length $0$. If $\mu $ is a path in $E$,
and if $v=s(\mu )=r(\mu )$ and $s(e_{i})\neq s(e_{j})$ for every $i\neq j$,
then $\mu $ is called a \emph{cycle based at} $v$. If $s^{-1}(v)$ is a
finite set for every $v\in E^{0}$, then the graph is called \emph{row-finite}. We call a vertex $v$  a \emph{sink} if   $s^{-1}(v)$ is empty; an \emph{infinite emitter} if $s^{-1}(v)$ is an infinite set; and a \emph{regular vertex} otherwise.   A subset $H$ of $E^{0}$ is said to be \textit{hereditary} if whenever $u\in H$ and there is a path $\mu $\ with  $s(\mu )=u$ and $%
r(\mu )=w$ then $w\in H$. A subset $H$ of $E^0$ is said to be \textit{%
saturated} if, for any regular vertex $v\in E^{0}$, whenever $r(s^{-1}(v))\subseteq H$ then $v\in H$.  It is easy to see that the intersection of
hereditary saturated subsets is again hereditary saturated. Given a
hereditary subset $H$, the smallest hereditary saturated subset containing $H$ is called the \textit{saturated closure} of $H$, and is denoted by $\overline{H}$.

For any vertex $v\in E^{0}$, the \textit{tree of }$v$ is the set
$$T(v)=\{w\in E^{0} : \mbox{ there is a path }\mu \mbox{ with }s(\mu )=v \mbox{ and } r(\mu )=w\}.$$
(We note that $T(v)$ need not be a tree in the usual graph-theoretic sense, as $T(v)$ might contain cycles.)   A vertex $v$ is said to  \textit{have a bifurcation}, or  \textit{is a bifurcation vertex}, if  $|s^{-1}(v)|$ $>1$. \  Following \cite{ABGM1} and \cite{ABGM2},  we say a
vertex $v$ is a \textit{line point} if no vertex in  $T(v)$ is either a bifurcation vertex or is the base of a cycle. If $v$ is a line point, then
it is clear that every $w\in T(v)$ is also a line point; specifically, the set $L$ of line points in a
graph $E$ is a hereditary subset. Observe that  $v$
is a line point exactly when the vertices in $T(v)$ form a (finite or infinite)
line segment.

A ring $R$ is said to be {\it von Neumann regular} if for each $x\in R$ there exists $y\in R$ for which $x=xyx$.
A ring $R$ is said to be $\Z$-{\it graded} in case there is a decomposition $R = \oplus_{n\in \Z}R_n$ as abelian groups having the property that $R_m\cdot R_n \subseteq R_{m+n}$ for all $m,n\in \Z$.  For a $\Z$-graded ring $R$ and $0\neq x\in R$, write $x = \sum x_n$ with $x_n \in R_n$ for each $n$.  The {\it degree} of $x$ is the maximum $n$ for which $x_n \neq 0$.   An ideal $I$ of a $\Z$-graded ring $R$ is {\it graded} in case $I = \oplus_{n\in \Z}(I \cap R_n)$;  equivalently, $I$ is graded if whenever $x_n\in R_n$ for each $n$ has $\sum x_n \in I$, then $x_n \in I$ for each $n$.  In this case, observe that the ring $R/I$ inherits a natural $\Z$-grading, and that the natural map $R \rightarrow R/I$ is a graded homomorphism.   Moreover, for an ideal $J$ of $R$ having $J \supseteq I$, $J$ is a graded ideal of $R$ if and only if $J/I$ is a graded ideal of $R/I$.

\medskip

\begin{definition}\label{definition}
 \rm{Let $E$ be any directed graph, and $K$ any field.
The \emph{Leavitt path $K$-algebra} $L_K(E)$ \emph{of $E$ with coefficients
in $K$} is the $K$-algebra generated by a set $\{v : v\in E^0\}$ of
pairwise orthogonal idempotents, together with a set of variables $%
\{e,e^* : e\in E^1\}$, which satisfy the following relations:

(1) $s(e)e=er(e)=e$ for all $e\in E^1$.

(2) $r(e)e^*=e^*s(e)=e^*$ for all $e\in E^1$.

(3) $e^*e^{\prime }=\delta _{e,e^{\prime }}r(e)$ for all $e,e^{\prime} \in E^1$.

(4) $v=\sum _{\{ e\in E^1: s(e)=v \}}ee^*$ for every regular
vertex $v\in E^0$.

\smallskip
\noindent
When the role of $K$ is not central to the discussion, we sometimes denote $L_K(E)$ simply by $L(E)$.

}
\end{definition}

\medskip


For any $e\in E^{1}$ we  let $r(e^{\ast })$ denote $s(e)$, and we let $%
s(e^{\ast })$ denote $r(e)$. If $\mu =e_{1}\dots e_{n}$ is a path, then we
denote by $\mu ^{\ast }$ the element $e_{n}^{\ast }\cdots e_{1}^{\ast }$ of $%
L_{K}(E)$.

\begin{remark}\label{mumustarlinepoint}
{\rm In particular, by Property (3) we have that if $w$ is any vertex in $E$, and $\mu$ is any path for which $r(\mu)=w$, then $\mu^* \mu = w$.   On the other hand, if $v$ is a vertex having only one edge $e$ in $s^{-1}(v)$, then Property (4) yields $ee^* =v$.  This observation and an easy induction yields  the following important property of the line points of $E$:  if $v$ is a line point, and $\mu$ is a path for which $s(\mu)=v$,    then $\mu \mu^* = v$.}
\end{remark}

If $E$ is a graph for which $E^{0}$ is finite then we have $\sum_{v\in
E^{0}}v$ is the multiplicative identity in $L_{K}(E)$; otherwise, $L_{K}(E)$
is a ring with a set of local units consisting of sums of distinct vertices.   In particular, if $I$ is any ideal of $L_K(E)$, then $I=L_K(E)$ if and only if $I\cap E^0 = E^0$.
Conversely, if $L_{K}(E)$ is unital, then $E^{0}$ is finite.

$L_{K}(E)$ is a
$\Z$-graded $K$-algebra, spanned as a $K$-vector space by $%
\{pq^{\ast }: p,q$ are paths in $E\}$. (Recall that the elements of $E^{0}
$ are viewed as paths of length $0$, so that this set includes elements of
the form $v$ with $v\in E^{0}$.)
In particular, for each $n\in \Z$,
the degree $n$ component $L_{K}(E)_{n}$ is spanned by elements of the form $%
\{pq^{\ast }: \mathrm{length}(p)-\mathrm{length}(q)=n\}$.



In the following result we present a useful description of the hereditary saturated closure of a hereditary subset of a graph.

\begin{lem}
Let $H$ be a hereditary subset of vertices in a graph $E$. Then a
vertex $u$ belongs to the saturated closure $\overline{H}$ of $H$ if and
only if there exists a positive integer $n$ such that every path of length $\geq n$ in $E$ that begins with $u$ must end in a vertex of $H$.
\end{lem}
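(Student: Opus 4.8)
The plan is to prove both implications by constructing an ascending chain of hereditary subsets whose union is the saturated closure, making the ``bounded length'' condition explicit at each finite stage.

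First I would set up the relevant chain. Given the hereditary subset $H$, define $\Lambda_0 = H$, and for each integer $n \geq 0$ let
$$\Lambda_{n+1} = \Lambda_n \cup \{v \in E^0 : v \text{ is a regular vertex and } r(s^{-1}(v)) \subseteq \Lambda_n\}.$$
An easy induction shows each $\Lambda_n$ is hereditary (using that $H$ is hereditary and that if $v$ emits only into $\Lambda_n$ then every $w \in T(v)$ lies in $T(v')$ for some $v' \in s^{-1}(v)$, hence in $\Lambda_n$ by the inductive hypothesis on hereditariness). Set $\Lambda = \bigcup_{n \geq 0} \Lambda_n$. The standard argument then gives $\Lambda = \overline{H}$: the set $\Lambda$ is hereditary (union of an ascending chain of hereditary sets) and saturated (if $v$ is regular with $r(s^{-1}(v)) \subseteq \Lambda$, then since $r(s^{-1}(v))$ need not be finite I would note $\Lambda$ is a directed union, but actually $v$ regular means $s^{-1}(v)$ is finite, so $r(s^{-1}(v)) \subseteq \Lambda_n$ for some $n$, whence $v \in \Lambda_{n+1} \subseteq \Lambda$); and $\Lambda$ is contained in any hereditary saturated set containing $H$ by induction on $n$. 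So $\overline{H} = \Lambda = \bigcup_n \Lambda_n$.

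Next I would reduce the theorem to the claim: $u \in \Lambda_n$ if and only if there is a bound $m$ (depending on $u$ and $n$) such that every path of length $\geq m$ starting at $u$ ends in $H$. For the forward direction, I induct on $n$. The base case $n=0$: if $u \in H$ then hereditariness of $H$ forces every path starting at $u$ (including the length-$0$ path) to end in $H$, so $m = 0$ works. Inductive step: if $u \in \Lambda_{n+1} \setminus \Lambda_n$, then $u$ is regular and every $w \in r(s^{-1}(u))$ lies in $\Lambda_n$, hence by induction each such $w$ has a bound $m_w$; since $s^{-1}(u)$ is finite, set $m = 1 + \max_w m_w$. Any path of length $\geq m$ from $u$ has the form $e\nu$ with $e \in s^{-1}(u)$ and $\nu$ a path of length $\geq m - 1 \geq m_{r(e)}$ starting at $r(e)$, so $\nu$ ends in $H$, hence so does $e\nu$. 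Conversely, suppose every path of length $\geq n$ from $u$ ends in $H$; I would show $u \in \overline{H}$ by induction on $n$. For $n = 0$ (or $n=1$ with $u$ a sink or path-terminal): the length-$0$ path from $u$ is $u$ itself, forcing $u \in H$. For the inductive step with bound $n \geq 1$: if $u$ is a sink then the only path from $u$ is $u$ itself and $u \in H \subseteq \overline{H}$; if $u$ is an infinite emitter we need a separate observation; otherwise $u$ is regular, and for each $e \in s^{-1}(u)$ every path of length $\geq n-1$ from $r(e)$ extends to a path of length $\geq n$ from $u$ hence ends in $H$, so by induction $r(e) \in \overline{H}$, and then saturation of $\overline{H}$ (applied to the regular vertex $u$) gives $u \in \overline{H}$.

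The main obstacle I anticipate is the infinite-emitter case in the converse direction: if $u$ is an infinite emitter, the hypothesis says every sufficiently long path from $u$ ends in $H$, and I would want to conclude $r(e) \in \overline{H}$ for each $e \in s^{-1}(u)$ by induction — which works fine — but then I cannot invoke saturation at $u$, because saturation only constrains \emph{regular} vertices. I would need to argue that this case simply cannot occur under the hypothesis: if $u$ is an infinite emitter with bound $n$, pick any $e \in s^{-1}(u)$; if $r(e) \notin \overline{H}$ we get a contradiction, but if $r(e) \in \overline{H}$ for all the infinitely many $e$, we still have $u \notin \overline{H}$ yet the hypothesis holds — so actually the theorem as stated would be \emph{false} for infinite emitters unless one reexamines it. Rereading, I suspect the intended reading forces $u$ into $H$ directly in the relevant base situation, or the hypothesis ``every path of length $\geq n$'' with $n$ chosen minimally pins down behavior at length exactly $n-1$; I would resolve this by showing the hypothesis on paths from $u$ forces, for the minimal such $n$, that $u$ has a path of length $n-1$ not ending in $H$ unless $n-1 = 0$, carefully ruling out the infinite-emitter obstruction — this bookkeeping is the delicate part, while everything else is routine double induction.
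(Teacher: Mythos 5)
Your setup coincides with the paper's: both build the chain $H=\Lambda_0\subseteq\Lambda_1\subseteq\cdots$ by adjoining at each stage the regular vertices that emit only into the previous stage, identify $\bigcup_n\Lambda_n$ with $\overline{H}$, and then run an induction on $n$. Your forward direction (membership in $\Lambda_n$ gives a bound on path lengths) is correct and is essentially the paper's argument.

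The converse is where the genuine problem lies, and you have located it precisely but cannot repair it --- because the statement as written is false at non-regular vertices. Two counterexamples: (i) let $E$ be a single vertex $v$ with no edges and $H=\emptyset$; then $\emptyset$ is hereditary and saturated (saturation only constrains regular vertices, and $v$ is a sink), so $\overline{H}=\emptyset$, yet ``every path of length $\geq 1$ beginning at $v$ ends in $H$'' holds vacuously; (ii) the infinite clock $C_\aleph$ of Example \ref{infiniteclockexample} with $H=\{r(e):e\in (C_\aleph)^1\}$, where the paper itself notes that $H$ is already saturated, so $v\notin\overline{H}$, even though every path of length $\geq 1$ from the center $v$ ends in $H$. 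So your suspicion about infinite emitters is correct, and the identical failure occurs at sinks; in particular your claim that ``if $u$ is a sink then the only path from $u$ is $u$ itself, forcing $u\in H$'' is not valid, since the hypothesis constrains only paths of length $\geq n\geq 1$ and says nothing about the length-$0$ path. You should know that the paper's own proof has exactly the same blind spot: its inductive step verifies the equivalence ``$v\in H_{m+1}$ iff every path of length $\geq m+1$ from $v$ ends in $H$'' only for regular $v$, silently ignoring sinks and infinite emitters. The lemma becomes true --- and both your argument and the paper's then go through --- if one adds the hypothesis that every sink and every infinite emitter in $T(u)$ already lies in $H$ (equivalently, one strengthens the path condition to require also that every path from $u$ of length $<n$ not ending in $H$ terminates at a regular vertex). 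Since the lemma is not invoked elsewhere in the paper, the defect does not propagate; but as a proof of the statement as literally given, neither your attempt nor the paper's succeeds, because no proof can.
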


\begin{proof}
By definition, $\overline{H}=\cup _{n<\omega }H_{n}$ where $H_{n}$ is
defined inductively as follows.   Let $H_{0}=H$. If   $H_{k}$ has been defined
for some $k\geq 0$, then set
$$H_{k+1}=H_{k}\cup \{v\in E^{0}:v \mbox{ is
a regular vertex, and } r(s^{-1}(v))\subset H_{k}\}.$$
We show, by induction on $k$, that  $v\in H_{k}$ if and only if every
path of length $\geq k$ that begins with $v$ must end in a vertex belonging
to $H$. This clearly holds when $k=0$, since $H_{0}=H$ is hereditary.  Assume we have proved the result when $k=m\geq 0$.  Let $v\in
E^{0}$ be a regular vertex, say
$s^{-1}(v)= \{e_{1},...,e_{t}\}$ and $r(e_{i})=v_{i}$, for $i=1,...,t$. Now $v\in
H_{m+1}$ precisely when $v\in H_{m}$ or when $\ r(s^{-1}(v))=\{v_{1},...,v_{t}\}\subset H_{m}$. By
induction, this is equivalent to saying that every path of length at least $m$
that begins with any vertex $v_{i}$ ends in $H$. This is clearly equivalent
to requiring that every path of length at least $m+1$ that begins with $v$ must
end in a vertex belonging to $H$. Hence the result follows.
\end{proof}

\vspace{1pt}

We continue by describing the following concepts which were initially introduced in papers dealing with C$^*$-algebras (see e.g. \cite{R}). These ideas were also considered for row-finite graphs in \cite{APM}, and for not-necessarily row-finite graphs in \cite{T}.

\begin{definition}\label{breakingvertices}
{\rm Let $E$ be an arbitrary graph and let $H$ be a hereditary saturated
subset of vertices in $E$.  \

(i) The set of  \textit{breaking vertices} of $H$, denoted $B_H$, is the set
\begin{equation*}
B_{H}=\{v\in E^{0}\backslash H:v\text{ is an infinite emitter, and }
0<|s^{-1}(v)\cap r^{-1}(E^{0}\backslash H)|\text{ }<\infty \}\text{. }
\end{equation*}

 (ii) The \textit{quotient graph} $E|H$ is defined as follows.   Let $
B_{H}^{\prime }$ be a set which is in one-to-one correspondence with $B_{H}$,
and write $B_{H}^{\prime }=\{v^{\prime }:v\in B_{H}\}$.
Define
$$(E|H)^{0}=(E^{0}\setminus H)\cup B_{H}^{\prime } \  \ \ \   \ \mbox{and} \  \ \ \  \ \
(E|H)^{1}=\{e\in E^{1}:r(e)\notin H\}\cup \{e^{\prime }:e\in E^{1} \mbox{ with } r(e)\in
B_{H}\}.$$
The source and range functions  $s_{E|H}$ and $r_{E|H}$ coincide with the functions $s_E$ and $r_E$ when applicable, while we define
$s_{E|H}(e^{\prime })=s_E(e)$ and $r_{E|H}(e^{\prime })=(r_E(e))^{\prime }$.

}
\end{definition}

We note that  each $v^{\prime }$\ is a sink in the graph $E|H$,  and so
is a line point in $E|H$.

\medskip

A useful tool in our construction is the following important theorem  of
Tomforde (\cite[ Theorem 5.7]{T}). This theorem has been established  under the hypothesis
that $E$ is a graph with at most countably many vertices and edges; however, an
examination of the proof reveals that the countability condition on $E$ is
not utilized. Hence we give a reformulation, tailored to our needs,  of
 various parts of  \cite[Lemma 5.6 and Theorem 5.7]{T} for arbitrary graphs $E$.

 \begin{definition}
 {\rm Let $E$ be a graph, and $S$ any subset of $E^0$.   We denote by $I(S)$ the two-sided ideal of $L(E)$ generated by $S$.}
 \end{definition}

\begin{theorem}\label{Tomfordetheorem}
Let $E$ be an arbitrary graph, and let $H$ be a hereditary saturated subset of $E$.  Then

(i)  $I(H)$ is a graded ideal of $L_K(E)$.

(ii) There is an algebra epimorphism
$$\phi
:L_{K}(E)\rightarrow L_{K}(E|H)$$
 for which $\ker \phi =I(H)$.  In particular,
 $$L_{K}(E)/I(H)\cong L_{K}(E|H).$$
Moreover, for each $v\in B_{H}$
 we have $\phi (v-\sum_{s(e)=v,r(e)\notin H}ee^{\ast })=v^{\prime
}$ (where $v^{\prime}$ is described in Definition \ref{breakingvertices}).

\smallskip

(iii) Let  $I$ be a graded ideal of $L_{K}(E)$. If $I\cap E^{0}=H$, and we define
$$S=\{v\in B_{H}:v-\sum_{s(e)=v,r(e)\notin H}ee^{\ast } \in I\},$$
 then $I = I(H\cup S).$



\end{theorem}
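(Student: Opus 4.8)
This is essentially \cite[Lemma 5.6 and Theorem 5.7]{T}; the plan is to recall Tomforde's argument and observe that the countability hypothesis imposed there is nowhere used.

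\emph{Part (i)} is immediate: $I(H)$ is generated by $H\subseteq E^0$, all of whose elements are homogeneous of degree $0$, and in any $\Z$-graded ring the ideal generated by a set of homogeneous elements is graded (each two-sided piece $RhR$ with $h$ homogeneous is the direct sum of its homogeneous components, and $I(H)$ is a sum of such pieces). Equivalently, using that $H$ is hereditary one checks that $I(H)$ is the $K$-span of $\{\alpha\beta^*:\alpha,\beta\text{ paths with }r(\alpha)=r(\beta)\in H\}$, visibly spanned by homogeneous elements. No restriction on the cardinality of $E^0$ or $E^1$ enters.

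For \emph{(ii)}, I would use the universal property of $L_K(E)$ to define $\phi$ on the canonical generators: send $v$ to $0$ for $v\in H$ and every $e,e^*$ with $r(e)\in H$ to $0$; fix the vertices $v\in E^0\setminus(H\cup B_H)$ and the edges $e,e^*$ with $r(e)\in E^0\setminus(H\cup B_H)$; and send each $v\in B_H$ to $v+v'$, each $e$ with $r(e)\in B_H$ to $e+e'$, and the corresponding $e^*$ to $e^*+(e')^*$. One would then verify that these images satisfy relations (1)--(4) of $L_K(E)$; the only delicate case is relation (4) at a regular vertex $w$, where if $w\in H$ all summands vanish by heredity, and if $w\notin H$ the cross terms $e(e')^*$ and $e'e^*$ vanish and $w$ is a regular vertex of $E|H$ emitting exactly $\{e:s(e)=w,\,r(e)\notin H\}\cup\{e':s(e)=w,\,r(e)\in B_H\}$, so that the relation collapses to relation (4) in $L_K(E|H)$. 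The resulting map $\phi$ is graded, since each generator goes to a homogeneous element of the same degree. It is surjective: for $v\in B_H$ one has $\phi(\sum_{s(e)=v,\,r(e)\notin H}ee^*)=v$ (this is relation (4) at $v$ inside $L_K(E|H)$, where $v$ is now regular), and therefore $\phi(v-\sum_{s(e)=v,\,r(e)\notin H}ee^*)=v'$, which simultaneously establishes the ``moreover'' clause and puts the primed vertices (and, by similar short computations, the primed edges and their adjoints) in the image. Plainly $I(H)\subseteq\ker\phi$. For the reverse inclusion I would exhibit an explicit inverse of the induced graded homomorphism $\bar\phi\colon L_K(E)/I(H)\to L_K(E|H)$, defined on generators by $v\mapsto v+I(H)$, $v'\mapsto(v-\sum_{s(e)=v,\,r(e)\notin H}ee^*)+I(H)$, $e\mapsto e+I(H)$, and $e'\mapsto e\cdot(r(e)-\sum_{s(f)=r(e),\,r(f)\notin H}ff^*)+I(H)$ (adjoints correspondingly), and check that it respects the defining relations of $L_K(E|H)$; alternatively one invokes the graded uniqueness theorem, since $\bar\phi$ is graded and nonzero on every vertex of $E|H$. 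Either way $\ker\phi=I(H)$, and none of this uses countability.

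For \emph{(iii)}, one first notes that $H=I\cap E^0$ is hereditary (if $v\in H$ and $s(e)=v$ then $e=ve\in I$, so $r(e)=e^*e\in I\cap E^0$) and saturated (if $w$ is regular with $r(s^{-1}(w))\subseteq H$ then $w=\sum_{s(e)=w}e\,r(e)\,e^*\in I$). Since $H\subseteq I$ we get $I(H)\subseteq I$, and for each $v\in S$ the element $v-\sum_{s(e)=v,\,r(e)\notin H}ee^*$ lies in $I$ by definition of $S$, so $I(H\cup S)\subseteq I$ (as is customary, for $v\in B_H$ the symbol $v$ appearing in $I(H\cup S)$ denotes the element $v-\sum_{s(e)=v,\,r(e)\notin H}ee^*$). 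For the reverse inclusion, I would transport $I$ through the isomorphism $\phi\colon L_K(E)/I(H)\to L_K(E|H)$ of \emph{(ii)} and set $\bar I=\phi(I)=I/I(H)$, a graded ideal of $L_K(E|H)$. A vertex computation shows $\bar I\cap(E|H)^0=\{v':v\in S\}=:H'$: no non-primed vertex $v\in E^0\setminus H$ lies in $\bar I$ (else $v\in I$, contradicting $I\cap E^0=H$), while $v'\in\bar I$ iff $v-\sum_{s(e)=v,\,r(e)\notin H}ee^*\in I$, i.e.\ iff $v\in S$. Since each primed edge $e'$ of $E|H$ is accompanied by a non-primed edge $e$ with the same source, no vertex of $E|H$ with an outgoing edge has all its outgoing edges ranging among the primed vertices, and no infinite emitter of $E|H$ has all but finitely many of them doing so; hence $H'$, a set of primed sinks, is (vacuously) saturated and $B_{H'}=\emptyset$ in $E|H$. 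Applying \emph{(ii)} to $E|H$ and $H'$ then gives $L_K(E|H)/I(H')\cong L_K((E|H)\setminus H')$, under which $\bar I/I(H')$ becomes a graded ideal meeting no vertex; as a nonzero graded ideal of a Leavitt path algebra must contain a vertex, $\bar I/I(H')=0$, i.e.\ $\bar I=I(H')$. Finally, pulling back through $\phi$ (recall $\ker\phi=I(H)$, $\phi$ is surjective, and $\phi(v-\sum_{s(e)=v,\,r(e)\notin H}ee^*)=v'$) yields $I=I(H)+(\text{the ideal generated by }\{v-\sum_{s(e)=v,\,r(e)\notin H}ee^*:v\in S\})=I(H\cup S)$.

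The main obstacle I anticipate is bookkeeping rather than anything conceptual: the statement is Tomforde's, so the real work is (a) handling every case of the generator assignments and the relation (4) check in \emph{(ii)} correctly around the breaking vertices and the quotient graph $E|H$, and (b) confirming that each ingredient used --- the universal-property construction, the relation verifications, the passage of a graded ideal to its image in $L_K(E|H)$, the identification of $H'$, and the fact that a graded ideal containing no vertex is zero --- goes through verbatim for graphs of arbitrary cardinality. The reason (b) works is that all of the sums and inductions involved are finitary (relation (4) and the defining condition on $B_H$ involve only finitely many edges) or purely algebraic, so the countability hypothesis in \cite{T} is never essential.
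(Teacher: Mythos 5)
The paper offers no proof of this theorem at all: it simply cites \cite[Lemma 5.6 and Theorem 5.7]{T} and remarks that an examination of Tomforde's proof shows the countability hypothesis is never used, which is exactly the stance your proposal takes before (correctly, as far as I can check) reconstructing Tomforde's generator-by-generator construction of $\phi$, the graded-uniqueness argument for $\ker\phi=I(H)$, and the passage to $\bar I=I/I(H)$ for part (iii). Your reading of $I(H\cup S)$ (with $v\in S$ standing for $v-\sum_{s(e)=v,\,r(e)\notin H}ee^{\ast}$) matches the intended meaning, so the proposal is correct and takes essentially the same approach as the paper.
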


 As an application of
Theorem \ref{Tomfordetheorem}, we get as promised the following extension to arbitrary graphs of a result  proved in
 \cite{AA3} and \cite{ABGM2} for row-finite or countable graphs.

\begin{proposition}\label{zeroradicalingeneral}
 Let $E$ be an arbitrary graph. Then the Jacobson radical $J(L(E))=\{0\}$.  In particular, $L(E)$ contains no nonzero nilpotent left or right ideals.
\end{proposition}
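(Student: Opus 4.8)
The plan is to establish $J(L(E)) = \{0\}$ by passing to a quotient in which the Jacobson radical becomes visible as a graded ideal, and then using the known structure theory of graded ideals of Leavitt path algebras (Theorem \ref{Tomfordetheorem}) to rule out the possibility of any nonzero element surviving. First I would recall the general principle that $J(R)$ is stable under the $\Z$-grading in the sense relevant here: for Leavitt path algebras one knows that $J(L(E))$ is a graded ideal. (This can be extracted either from the fact that $L(E)$ carries a gauge-type action of $K^\times$ when $K$ is infinite, or more robustly from the purely algebraic argument that works over an arbitrary field: if $x = \sum x_n \in J(L(E))$ then each homogeneous component $x_n$ lies in $J(L(E))$ as well, since $J$ of a $\Z$-graded ring with local units is graded.) Granting that $J := J(L(E))$ is a graded ideal, set $H = J \cap E^0$, which is automatically hereditary and saturated.

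Next I would invoke Theorem \ref{Tomfordetheorem}(iii): since $J$ is graded with $J \cap E^0 = H$, there is a subset $S \subseteq B_H$ with $J = I(H \cup S)$. The key point is then that $H$ must be empty: a vertex $v$ is an idempotent, so if $v \in H \subseteq J(L(E))$, then $v = v^2$ forces $v(1-v)$-type reasoning — more precisely, an idempotent in the Jacobson radical must be zero (if $e \in J(R)$ is idempotent then $1-e$ is invertible in the unitization, and $(1-e)e = 0$ gives $e=0$). Since vertices are nonzero idempotents in $L(E)$, we get $H = \emptyset$. But then $B_H = B_\emptyset$; in fact with $H=\emptyset$ the "breaking vertex" generators $v - \sum_{s(e)=v,\, r(e)\notin H} ee^*$ are $v - \sum_{s(e)=v} ee^* = 0$ for regular... no — for $v \in B_H$, $v$ is an infinite emitter, and the element $v - \sum_{s(e)=v,\, r(e)\notin H} ee^*$ with $H = \emptyset$ is a genuinely nonzero idempotent? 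Here I need to be careful: I would instead argue directly that $I(H \cup S)$ with $H = \emptyset$ is generated by elements $p_v := v - \sum_{s(e)=v,\, r(e)\notin H} ee^*$, each of which is an idempotent (a routine check using relations (1)–(3)), and again an idempotent in $J(L(E))$ must be zero, so $S = \emptyset$ as well. Therefore $J = I(\emptyset) = \{0\}$.

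The last sentence of the Proposition follows immediately: if $N$ is a nonzero nilpotent left (or right) ideal, then $N \subseteq J(L(E))$ since nil one-sided ideals always lie in the Jacobson radical, contradicting $J(L(E)) = \{0\}$.

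The main obstacle I anticipate is the first step — justifying that $J(L(E))$ is a graded ideal for an \emph{arbitrary} field $K$ (the gauge-action argument needs $K$ infinite, so one wants the characteristic-free version) and for a possibly non-unital ring with local units. One has to check that the standard proof "$J$ of a $\Z$-graded ring is graded" goes through in the local-units setting; the cleanest route is probably to reduce to corners $vL(E)v$ or to finite sums of vertices $u = v_1 + \cdots + v_k$, where $uL(E)u$ is unital and graded, note $J(uL(E)u) = u\,J(L(E))\,u$, apply the unital graded result there, and take the union over all such $u$. Everything after that is essentially bookkeeping with idempotents and an appeal to Theorem \ref{Tomfordetheorem}(iii).
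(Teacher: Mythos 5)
Your proposal is correct and follows essentially the same route as the paper: the paper cites \cite[Lemma 6.2]{AA3} for the fact that $J(L(E))$ is a graded ideal in the local-units setting (the very point you flag as the main obstacle), then applies Theorem \ref{Tomfordetheorem}(iii) and the observation that $J$ contains no nonzero idempotents to get $H=\emptyset$. The only simplification you missed is at the end: once $H=\emptyset$, every infinite emitter has \emph{infinitely} many edges with range in $E^0\setminus H=E^0$, so $B_H=\emptyset$ directly from the definition and hence $S=\emptyset$ for free --- no need for the idempotent argument on the elements $v-\sum ee^*$ (which would also require checking they are nonzero).
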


\begin{proof}
Since $L(E)$ is a ring with local units,    $J=J(L(E))$ is a
graded ideal by \cite[Lemma 6.2]{AA3}.   From Theorem \ref{Tomfordetheorem}(iii), $J$  is the ideal of $L(E)$
generated by $H\cup S$, where $H=E^{0}\cap J$ and $S=$ $\{v\in
B_{H}:v-\sum_{s(e)=v,r(e)\notin H}ee^{\ast }$ $\in J\}$.   Since  $J$ contains no nonzero idempotents, $H$ must be the empty
set. From the definition of $B_{H}$ we then conclude that $B_{H}$, and hence
$S$, must be the empty set as well. This implies that $J=\{0\}.$
\end{proof}

 When $R=L_{K}(E)$ and $E$ is row-finite or is countably infinite, then $Soc(R)$ has been described in \cite{ABGM1} and \cite{ABGM2}. Specifically, $Soc(R)$ is the
ideal of $L_K(E)$  generated by the line points of $E$.  Our goal for the remainder of this section is to extend this result to the case where $E$ is an
arbitrary graph.  Along the way, we provide a simpler proof of the fact that the line
points are precisely those vertices which generate a simple left (equivalently, right)
ideal of $L_{K}(E)$.

\begin{proposition}(\cite{ABGM1}, \cite{ABGM2}) \label{linepoint}
Let $E$ be an arbitrary graph and $v\in E^{0}$. \ Then $v$ is a line point
exactly when the left ideal $L_{K}(E)v$ of $L_K(E)$ is simple.
\end{proposition}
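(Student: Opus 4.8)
The plan is to prove both implications by direct computation with the standard spanning set $\{pq^\ast : p,q \mbox{ paths in } E,\ r(p)=r(q)\}$ of $L_K(E)$, using throughout Remark~\ref{mumustarlinepoint} (that $\mu^\ast\mu=r(\mu)$ for every path $\mu$, and --- by the observation in its proof --- that $\mu\mu^\ast=s(\mu)$ whenever every vertex that emits an edge of $\mu$ has exactly one outgoing edge), together with the standard fact that vertices and edges are nonzero in $L_K(E)$.

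To prove "$v$ a line point $\Rightarrow L_K(E)v$ simple" I would show that every nonzero cyclic $L_K(E)$-submodule of $L_K(E)v$ equals $L_K(E)v$. Since $v$ is a line point, $T(v)$ is a line $v=v_0\to v_1\to v_2\to\cdots$ with a unique path $q_j\colon v\to v_j$, and $L_K(E)v$ is spanned by $\{p\,q_j^\ast : j\ge 0,\ r(p)=v_j\}$; so a nonzero $m\in L_K(E)v$ can be written $m=\sum_{i=1}^n k_ip_iq_{j_i}^\ast$ with $k_i\in K^\ast$. Putting $J=\max_i j_i$, right multiplication by $q_J$ collapses each $q_{j_i}^\ast q_J$ to a path $v_{j_i}\to v_J$, so $mq_J$ is a $K$-linear combination of paths ending at $v_J$, and $mq_J\neq 0$ because $mq_Jq_J^\ast=mv=m$ (using $q_Jq_J^\ast=v$). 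After collecting terms $mq_J=\sum_\ell c_\ell r_\ell$ with $c_\ell\in K^\ast$ and the $r_\ell$ distinct paths ending at $v_J$; since $T(v)$ is cycle-free, no $r_\ell$ is a prefix of another, so $r_1^\ast r_\ell=\delta_{1,\ell}v_J$ and hence $r_1^\ast m q_J=c_1v_J$. Right-multiplying by $q_J^\ast$ (again using $q_Jq_J^\ast=v$ and $mv=m$) gives $r_1^\ast m=c_1v_Jq_J^\ast$, and left-multiplying by $q_J$ gives $q_Jr_1^\ast m=c_1 q_Jq_J^\ast=c_1v$. Hence $v\in L_K(E)m$, so $L_K(E)m=L_K(E)v$ and simplicity follows.

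For the converse I would argue by contraposition: if $v$ is not a line point, some $w\in T(v)$ is a bifurcation vertex or the base of a cycle, and I exhibit a proper nonzero submodule of $L_K(E)v$; fix a path $\mu\colon v\to w$. Case 1: some $w\in T(v)$ is a bifurcation vertex. Pick distinct $e_1,e_2\in s^{-1}(w)$ and put $f=\mu e_1e_1^\ast\mu^\ast$; short computations with relations (1)--(4) show $f$ is an idempotent of $vL_K(E)v$ with $f\neq 0$ and $f\neq v$ (if $f=v$ then $e_1e_1^\ast=w$, forcing $e_2=0$). Then $f,v-f$ are orthogonal idempotents summing to $v$, so $L_K(E)v=L_K(E)f\oplus L_K(E)(v-f)$ is a nontrivial direct sum, hence not simple. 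Case 2: $T(v)$ has no bifurcation vertex but some $w\in T(v)$ is the base of a cycle $c$. Every vertex emitting an edge of $\mu$ or $c$ lies in $T(v)$ and is therefore not a bifurcation vertex, so has exactly one outgoing edge; hence $\mu\mu^\ast=v$, $cc^\ast=w$, and $c^nc^{\ast n}=w$ for all $n\ge 1$. I would then show $x:=\mu c\mu^\ast-v$ (a nonzero element of $L_K(E)v$) generates a proper submodule: if $v=ax$, then after replacing $a$ by $av$ and multiplying through by $\mu$ one gets $0\neq b:=a\mu\in L_K(E)w$ with $bc=b+\mu$, so $bc^n=b+\mu\sum_{i=0}^{n-1}c^i$ for all $n$; since $c^nc^{\ast n}=w$, right multiplication by $c^n$ is injective on $L_K(E)w$ and raises $\Z$-degree by $n\,\mathrm{length}(c)$, so comparing the top $\Z$-degrees of the two sides for large $n$ forces every homogeneous component of $b$ to have degree $<\mathrm{length}(\mu)$, and then comparing the bottom degrees forces $\mathrm{length}(c)=0$ --- a contradiction.

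The step I expect to be the real obstacle is Case 2 of the converse. Unlike Case 1 there is no idempotent of $vL_K(E)v$ to split off --- that corner behaves like the Laurent polynomial ring $K[x,x^{-1}]$ --- so one must instead identify the right element ($\mu c\mu^\ast-v$, the analogue of $x-1$) and carry out the $\Z$-degree bookkeeping above. Everything else reduces to routine manipulation of the defining relations together with the identities $\mu^\ast\mu=r(\mu)$ and $\mu\mu^\ast=s(\mu)$.
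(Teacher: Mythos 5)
Your proof is correct, but it takes a genuinely different route from the paper's in two places, so a comparison is worthwhile. For the forward implication the paper does not argue element-by-element: it first computes $vL_{K}(E)v=Kv$ (using the line-point structure exactly as you would), concludes that every nonzero endomorphism of $L_{K}(E)v$ is an automorphism, and then invokes semiprimeness of $L_{K}(E)$ (Proposition \ref{zeroradicalingeneral}) to produce, from an arbitrary nonzero $a\in L_{K}(E)v$, an element $b$ with $aba\neq 0$, whence right multiplication by $ba$ is an automorphism and $v=cba\in L_{K}(E)a$. Your clearing argument with $q_{J}$ and $r_{1}^{\ast}$ reaches the same conclusion by direct computation and needs no semiprimeness input; the one point that must be (and is) supplied is that no $r_{\ell}$ is a prefix of another because $T(v)$ is a cycle-free line, so that $r_{1}^{\ast}r_{\ell}=\delta_{1,\ell}v_{J}$. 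For the converse, your bifurcation case is essentially the paper's: they transfer simplicity from $v$ to the bifurcation vertex $u$ via right multiplication by $\mu$ and then split $L_{K}(E)u=L_{K}(E)ee^{\ast}\oplus C$, while you conjugate the idempotent back to $v$ as $\mu e_{1}e_{1}^{\ast}\mu^{\ast}$ --- the same decomposition seen through the isomorphism $L_{K}(E)v\cong L_{K}(E)u$. The substantive divergence is the cycle case: the paper disposes of it by citing \cite[Proposition 2.5]{ABGM1}, whereas you prove it from scratch with the element $\mu c\mu^{\ast}-v$ and a $\Z$-degree comparison; your identity $bc^{n}=b+\mu\sum_{i=0}^{n-1}c^{i}$, combined with injectivity of right multiplication by $c^{n}$ on $L_{K}(E)w$ (which follows from $c^{n}c^{\ast n}=w$, valid precisely because no vertex on $c$ bifurcates), does force the contradiction $\mathrm{length}(c)=0$ as you claim. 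The trade-off is clear: your version is self-contained and entirely elementary, while the paper's is shorter because it leans on Proposition \ref{zeroradicalingeneral} and on the earlier literature for the cycle obstruction.
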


\begin{proof}
Suppose $v$ is a line point. We claim that every nonzero $L_K(E)$-endomorphism
of $L_{K}(E)v$ is an automorphism. Since $v$ is an idempotent, it is
well-known  (and can also be easily seen) that ${\rm End}(L_{K}(E)v)\cong
vL_{K}(E)v$.  An arbitrary element $a\in vL_{K}(E)v$ will be of the form $
a=v(\sum_{i=1}^nk_{i}p_{i}q_{i}^{\ast })v=$ $\sum_{i=1}^n
k_{i}(vp_{i}q_{i}^{\ast }v)$. \ Observe that if $vp_{i}q_{i}^{\ast }v\neq 0$ for some $1 \leq i \leq n$,  then $s(p_{i})= s(q_{i})=v,$ and $r(p_i)=r(q_i)$.  Since $v$
is a line point, this yields that $p_{i}=q_{i}$.
 But by Remark \ref{mumustarlinepoint} we then get $vp_{i}q_{i}^{\ast }v=vp_{i}p_{i}^{\ast }v=v$. Hence $a=(\sum_{i=1}^n k_{i})v$
and we conclude that $vL_{K}(E)v=Kv$ is a field with identity element $v$, which establishes the claim.

Since $L_K(E)$ has local units, we have $L_K(E)a \neq \{0\}$. By Proposition \ref{zeroradicalingeneral}, $L_K(E)$ has no nonzero nilpotent one-sided ideals, so that  $(L_K(E)a)^2 \neq \{0\}$.  In particular there is an element  $b\in L_{K}(E)$ such that  $aba\neq 0$.  Thus right multiplication by $ba$ is a
nonzero endomorphism of $L_{K}(E)v$ \ (as \ $aba\neq 0$), and hence by the previous paragraph is an
automorphism. In particular $v=cba$ for some $c\in L_{K}(E)$. This means that $v\in
L_{K}(E)a$, showing that $L_{K}(E)v$ is simple.

Conversely,  suppose  $L_{K}(E)v$ is simple.  Suppose by way of
contradiction that $T(v)$ has vertices with bifurcations, and choose a
bifurcation vertex $u\in T(v)$ so that there is a path $\mu $\ of shortest
length connecting $v$ to $u$.  Then  $L_{K}(E)u$ is simple,  since using Remark \ref{mumustarlinepoint} it is easy to show that
right multiplication by $\mu$ induces an isomorphism from $L_{K}(E)v$ to  $L_{K}(E)u$ (with inverse map right multiplication by $\mu^*$).  As shown in \cite[Proposition 2.5]{ABGM1}, the existence of a cycle based at a
vertex in $T(v)$ would contradict the simplicity of $L_{K}(E)v$, so we may assume that $T(v)$ is acyclic.  Now let $e$ be an edge with $s(e)=u$. Then $L_{K}(E)u=L_{K}(E)ee^{%
\ast }\oplus C$, where $C=\{x-xee^{\ast } : x\in
L_{K}(E)\}$.  Now $C\neq \{0\}$, since there is another edge $f\neq e$ with $%
s(f)=u$ (and $fe=0$ since $f$ is not a loop by the previous observation), so that $f=f-fee^{\ast }\in C$. This contradicts the
simplicity of $L_{K}(E)u$. Hence $T(v)$ contains no vertices with bifurcations.    Hence $v$ must
be a line point.
\end{proof}

\begin{remark}
{\rm Although it would also follow from more general (and much deeper) structural results about semiprime rings, in fact a direct ``left to right" modification of the proof given above easily yields that $v\in E^0$ is a line point if and only if the right ideal $vL_K(E)$ of $L_K(E)$ is simple.}
\end{remark}

 The following result is established in \cite[Theorem 4.2]{ABGM1} for row-finite graphs, and in \cite[Theorem 5.2]{ABGM2} for countable graphs; effectively the proof is the same in both cases.  Using Proposition \ref{linepoint}, indeed this
 same proof can be used to establish the result for arbitrary graphs $E.$  That is, we have

\begin{theorem}\label{soclelinepoints}  For an arbitrary graph $E$ and field $K$, $Soc(L_{K}(E))$ is the
two sided ideal generated by the set of line points in $E$.
\end{theorem}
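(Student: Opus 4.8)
The plan is to prove $\mathrm{Soc}(L_K(E)) = I(L)$, where $L$ denotes the set of line points of $E$, by two inclusions, and I expect the work to break up roughly as in the cited proofs for row-finite and countable graphs, with Proposition \ref{linepoint} and Theorem \ref{Tomfordetheorem} available as black boxes to remove the countability hypothesis.

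For the inclusion $I(L) \subseteq \mathrm{Soc}(L_K(E))$: by Proposition \ref{linepoint}, for each line point $v$ the left ideal $L_K(E)v$ is simple, hence $L_K(E)v \subseteq \mathrm{Soc}_l(L_K(E)) = \mathrm{Soc}(L_K(E))$ (the left-right symmetry being available via Proposition \ref{zeroradicalingeneral}, as noted after the definition of socle). Since $\mathrm{Soc}(L_K(E))$ is a two-sided ideal, it contains the two-sided ideal generated by all such $v$, namely $I(L)$. This direction is essentially immediate.

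For the reverse inclusion $\mathrm{Soc}(L_K(E)) \subseteq I(L)$: since $L$ is a hereditary subset of $E^0$ (noted in the excerpt), its saturated closure $\overline{L}$ is hereditary saturated, and one checks (or cites) that $I(L) = I(\overline{L})$; by Theorem \ref{Tomfordetheorem}(ii) we have $L_K(E)/I(\overline{L}) \cong L_K(E|\overline{L})$, and this quotient is a graded ideal, so it suffices to show that the quotient graph $F := E|\overline{L}$ has \emph{no} line points, because then $\mathrm{Soc}(L_K(F)) = \{0\}$ (no simple left ideal is generated by a vertex, and one needs the supplementary fact that every simple left ideal of $L_K(F)$ is in fact of the form $L_K(F)w$ for a line point $w$ — this is the nontrivial input, and is exactly what \cite[Theorem 4.2]{ABGM1} / \cite[Theorem 5.2]{ABGM2} supply). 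Granting that, if $\mathrm{Soc}(L_K(E))$ were not contained in $I(\overline{L})$, its nonzero image in $L_K(F)$ would be a nonzero two-sided ideal contained in $\mathrm{Soc}(L_K(F)) = \{0\}$ (using that the image of the socle lands in the socle of the quotient, since the quotient map kills no simple submodule unnecessarily — here one uses that $I(\overline L)$ is graded), a contradiction. So the crux is: $F$ has no line points. Suppose $w \in F^0$ were a line point of $F$. The vertices of $F$ are $E^0 \setminus \overline{L}$ together with the sinks $v'$ for $v \in B_{\overline{L}}$. A new sink $v'$ is a line point of $F$; but pulling back along $\phi$, the element $v - \sum_{s(e)=v, r(e)\notin \overline L} ee^*$ maps to $v'$, and tracing through one finds $v \in \overline{L}$ would follow (or a contradiction with $v \notin \overline L$), which needs care. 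For $w \in E^0 \setminus \overline{L}$: being a line point of $F$ means $T_F(w)$ has no bifurcation and no cycle; one then argues that the corresponding tree $T_E(w)$, after accounting for the edges removed in passing to the quotient, also eventually has this property, which by the description of $\overline L$ in Lemma 1.4 (every sufficiently long path from $w$ ends in $L$) would force $w \in \overline{L}$, a contradiction.

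The main obstacle is this last step — showing $E|\overline{L}$ has no line points — and within it the delicate point is the behavior at breaking vertices and their associated sinks $v'$: one must check that the new sinks $v'$, though they are line points of the quotient, cannot be "seen" by the socle, i.e. that $\mathrm{Soc}(L_K(E))$ actually maps into $I(\overline L)$ and not merely into something containing these $v'$. Concretely this amounts to verifying that no simple left ideal of $L_K(E)$ survives in the quotient, which is where invoking the full strength of the ABGM proof (valid verbatim once Proposition \ref{linepoint} is in hand) is the cleanest route; the excerpt's own remark — that "the same proof" of \cite[Theorem 4.2]{ABGM1} works for arbitrary $E$ given Proposition \ref{linepoint} — signals that the authors intend exactly this, so I would present the argument as: reduce via $L_K(E)/I(\overline L) \cong L_K(E|\overline L)$ to a graph with no line points, then quote that in such a graph the socle is zero, and conclude.
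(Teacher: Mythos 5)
Your first inclusion ($I(L)\subseteq Soc(L_K(E))$ via Proposition \ref{linepoint} and the two-sidedness of the socle) is fine and matches the intended argument. The problem is the reverse inclusion: your strategy rests on the claim that the quotient graph $F=E|\overline{L}$ has no line points, and that claim is false. The paper's own examples refute it: for the graph $P_2$ of Example \ref{P2example}, $\overline{L}$ is the top row and $P_2|\overline{L}\cong P_1$, \emph{every} vertex of which is a line point; for the infinite clock $C_{\aleph}$ of Example \ref{infiniteclockexample}, the quotient is a single isolated vertex, again a line point. Your appeal to Lemma 1.4 does not produce the contradiction you want, because a vertex $w\notin\overline{L}$ that becomes a line point of the quotient typically supports an infinite path avoiding $L$ entirely (as $v_{2,1}$ does in $P_2$), so no finite $n$ as in that lemma exists. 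Indeed, if your claim were true, then $Soc(L_K(E)/S_1)$ would vanish whenever $S_1\neq 0$ and every socle series would stabilize at $S_1$ --- contradicting Examples \ref{P2example}, \ref{downpyramids} and Theorem \ref{realizationtheorem}, whose whole point is that the quotient by the first socle acquires \emph{new} line points. (The breaking-vertex sinks $v'$ that you flag as "delicate" are a second, independent source of line points in $F$, but the failure already occurs in the row-finite case with $B_{\overline{L}}=\emptyset$.)

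Note that the implication you actually need is not "$Soc(L_K(F))=0$" but only "$Soc(L_K(E))$ maps to $0$ in $L_K(F)$," and these are genuinely different since the socle of a quotient is in general strictly larger than the image of the socle. So the quotient-graph reduction cannot be repaired in this form. The paper's route (and the one you gesture at in your closing paragraph) is different in structure: one argues directly on minimal left ideals of $L_K(E)$ --- using semiprimeness (Proposition \ref{zeroradicalingeneral}) to write a minimal left ideal as $L_K(E)\epsilon$ for an idempotent $\epsilon$ with $\epsilon L_K(E)\epsilon$ a division ring, and then locating a line point in the relevant tree --- exactly the argument of \cite[Theorem 4.2]{ABGM1} and \cite[Theorem 5.2]{ABGM2}, whose only countability-dependent ingredient is replaced by Proposition \ref{linepoint}. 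If you want a complete proof, you should reproduce that argument rather than pass to $E|\overline{L}$.
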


\bigskip

 Theorems \ref{Tomfordetheorem} and \ref{soclelinepoints} provide us with the two fundamental tools we will use to establish our main results.    We seek to understand the behavior of the socle series of $L(E)$; this requires us to understand the behavior of the socle of each of the quotients $L(E)/S_{\alpha}$.   By Theorem \ref{Tomfordetheorem} we will be able to realize $L(E)/S_{\alpha}$ as $L(F)$ for some graph $F$.   In turn,  by Theorem \ref{soclelinepoints}, we will be able to identify the socle of this quotient in terms of the line points of $F$.

\section{\protect\bigskip Examples}\label{ExamplesSection}

In this section we analyze the Loewy lengths of  Leavitt path algebras $L_{K}(E)$ of
various graphs $E$ over any field $K$.  We begin with the most basic type of Leavitt path algebras.

\begin{example}\label{acyclicexample}
{\rm

Let $E$ be a finite acyclic graph.  Then $L_K(E)$ is semisimple artinian, as $L_K(E)$ is a direct sum of complete
matrix rings over $K$ (see e.g. \cite[Proposition 3.5]{AAS}).
Thus we  have in this case that $L_K(E)$ is a Loewy ring, and $\ell(L_K(E))=1$.   In particular, the trivial graph
$$P_0 : \ \ \  \ \bullet^v$$
consisting of one vertex and no edges is a Loewy ring of length 1.
}
\end{example}

\begin{example}\label{notLoewyexamples}
{\rm    In this second example we present Leavitt path algebras which are not Loewy rings.   First, if $E$ is the graph with a single vertex $v$\ and a single loop $x$\ at $%
v$,  then  $R=L_{K}(E)\cong K[x,x^{-1}]$, the ring of Laurent polynomials
over $K$.  In this case  $\ell(R)=0$ since $Soc(R)=\{0\}$.   Seen another way, since $E$ has no line
points,  $Soc(R)=\{0\}$ by Theorem \ref{soclelinepoints}.   In particular, $R$ is not a Loewy ring.

 Now let $T$ be the graph
 \medskip
\[\xymatrix{ {\bullet}^{v} \ar@(ul,dl) \ar[r]&{\bullet}^u} \]
 Since $u$ is the unique line point of $T$, we get that  $R=L_{K}(T)$ satisfies $S_{1}=Soc(R)=I(u)$, the ideal generated by $u$.  It is easy to show that $Soc(R)\neq R$, since in particular $v\notin Soc(R)$.  Also, by Theorem \ref{Tomfordetheorem}(ii),  $R/S_{1}\cong L_{K}(T|\{u\})$. Since  $T|\{u\}$ is the graph with a single vertex and single edge, we obtain from
the preceding paragraph that $R/S_{1}\cong K[x,x^{-1}]$ and that $Soc(R/S_{1})=\{0\}$.
 Hence $S_{1}=S_{2}=\cdot
\cdot \cdot $.  Specifically, we have $\ell(R)=1$, but $R$ is not a Loewy ring as $R\neq S_{i}$ for any $i$.
}
\end{example}

\begin{example}\label{infiniteclockexample}
{\rm Let $\aleph$ be any infinite cardinal, and let $C_{\aleph}$ be the ``infinite clock" graph with $\aleph$ edges
 $$ \xymatrix{ & {\bullet} & {\bullet} \\  & {\bullet}^v  \ar@{.>}[ul] \ar[u] \ar[ur] \ar[r] \ar[dr] \ar@{.>}[d]
\ar@{}[dl] _{(\aleph)} & {\bullet} \\ &  & {\bullet}}$$

The set $H=\{r(e):e\in (C_{\aleph })^{1}\}$ is  a hereditary
saturated subset of $(C_{\aleph })^{0}$ (recall that by definition the saturated condition only applies at regular vertices), and indeed is precisely the set of line
points in $C_{\aleph }$. By Theorem \ref{soclelinepoints}, $Soc(L_{K}(C_{\aleph }))=I(H)$. Then, by Theorem \ref{Tomfordetheorem}(ii), $L_{K}(C_{\aleph})/I(H)\cong L_{K}(C_{\aleph }|H)=L_{K}(\{v\})\cong K$. Since $Soc(K)=K$, we get that $L_{K}(C_{\aleph })$ is thus a Loewy ring with Loewy length $2$.
}
\end{example}

\begin{example}\label{disconnectedexample}
{\rm
Let $P_0^{\omega}$ denote the graph
consisting of countably many vertices  and no edges.  Then $L_K(P_0^{\omega})\cong \oplus_{n \in \N}K$ as (nonunital) rings, so that we immediately conclude that $L_K(P_0^{\omega})= Soc(L_K(P_0^{\omega}))$.  In particular, $L_K(P_0^{\omega})$ is a Loewy ring of Loewy length 1.   (Note that every vertex in $P_0^{\omega}$ is vacuously a line point, so that this result also follows immediately from Theorem \ref{soclelinepoints}.)
}
\end{example}

\begin{example}\label{infinitelineexample}
{\rm
Let $P_1$ denote the ``infinite line" graph

$$\xymatrix{ P_1: &  {\bullet}^{v_{1,1}} \ar[r]  & {\bullet}^{v_{1,2}} \ar[r]  & {\bullet}^{v_{1,3}} \ar[r]  &
               {\bullet}^{v_{1,4}} \ar@{.>}[r] & }$$

\medskip

\noindent
Then clearly every vertex in $P_1$ is a line point, so by Theorem \ref{soclelinepoints} we conclude that  $L_K(P_1) = Soc(L_K(P_1))$.  In particular, $L_K(P_1)$ is a Loewy ring of Loewy length 1.
}
\end{example}

The graphs $P_0$ and  $P_1$ of the previous examples will be used as the foundation for the construction presented in the proof of Theorem \ref{realizationtheorem}.   Intuitively, $P_0$ and $P_1$ provide the two canonical examples of graphs for which the corresponding Leavitt path algebras are Loewy rings of Loewy length 1.

\begin{example}\label{P2example}
{\rm
Let $P_2$ denote the graph

$$\xymatrix{ P_2: & {\bullet}^{v_{1,1}} \ar[r]  & {\bullet}^{v_{1,2}} \ar[r]  & {\bullet}^{v_{1,3}} \ar[r]  &
               {\bullet}^{v_{1,4}} \ar@{.>}[r] &  \\
            & {\bullet}^{v_{2,1}} \ar[r] \ar[u] & {\bullet}^{v_{2,2}} \ar[r] \ar[ul] & {\bullet}^{v_{2,3}} \ar[r] \ar[ull] & {\bullet}^{v_{2,4}}\ar@{.>}[r] \ar[ulll] & }$$

\medskip

\noindent
The set $H = \{v_{1,j} : j\in \N\}$ (i.e., the ``top row" of $P_2$) is the set of line points in the graph
$P_2$; furthermore, an easy observation yields that  $H=\overline{H}$.  Note that the quotient graph $P_2|H$ consists of the vertices and edges in the ``bottom row" of $P_2$, which is clearly isomorphic as a graph to $P_1$.  Now by Theorem \ref{soclelinepoints}, $Soc(L_{K}(P_{2}))=I(H)$, the ideal generated
by $H$.  Furthermore, by Theorem \ref{Tomfordetheorem}(ii),
$$L_{K}(P_2)/Soc(L_{K}(P_2))=L_{K}(P_2)/I(H)\cong L_{K}(P_2|H) \cong
L_{K}(P_1).$$
 We conclude that $L_K(P_2)$ is a Loewy ring having $\ell (L_{K}(P_2))=2$.
}
\end{example}

\begin{example}\label{downpyramids}
{\rm
For each integer $i\geq 2$ we construct inductively the ``pyramid" graph $P_i$ as follows.   The graphs $P_1$ and $P_2$ are presented in the preceding examples.  For each $i\geq 2$ we construct the graph $P_{i+1}$ from the graph $P_i$ by adding vertices $v_{i+1,1},v_{i+1,2},v_{i+1,3},...$ and two sets of edges: for each $j\in \N$, an edge from $v_{i+1,j}$ to $v_{i+1,j+1}$; and, for each $j\geq 1$, a single edge from $v_{i+1,j}$ to $v_{i,1}$.  So, for example, we have

$$\xymatrix{ P_3: & {\bullet}^{v_{1,1}} \ar[r]  & {\bullet}^{v_{1,2}} \ar[r]  & {\bullet}^{v_{1,3}} \ar[r]  &
               {\bullet}^{v_{1,4}} \ar@{.>}[r] &  \\
             & {\bullet}^{v_{2,1}} \ar[r] \ar[u] & {\bullet}^{v_{2,2}} \ar[r] \ar[ul] & {\bullet}^{v_{2,3}} \ar[r] \ar[ull] & {\bullet}^{v_{2,4}}\ar@{.>}[r] \ar[ulll] &  \\
             & {\bullet}^{v_{3,1}} \ar[r] \ar[u] & {\bullet}^{v_{3,2}} \ar[r] \ar[ul] & {\bullet}^{v_{3,3}} \ar[r] \ar[ull] & {\bullet}^{v_{3,4}}\ar@{.>}[r] \ar[ulll] &  \\
            }$$

\medskip

\noindent
By induction, using the argument of Example \ref{P2example}, it is straightforward to show, for each $i\in \N$, that $L_K(P_i)$ is a Loewy ring for which $\ell(L_K(P_i))=i$.

We now view  $P_{i}\subseteq P_{i+1}$ for each $i\in \N$, and make two observations.  First, it is clear that $P_i^0$ (the set of vertices of $P_i$) is a hereditary saturated subset of $P_{i+1}$.  (Notice that although each vertex $v_{i+1, n}$ emits an edge into $P_i^0$, each such vertex as well emits an edge whose range vertex is {\it not} in $P_i^0$, whereby the saturated property of $P_i^0$ follows.)  In particular we can form the quotient graph $P_{i+1}|P_i^0$; it is immediate that $P_{i+1}|P_i^0 \cong P_1$ as graphs.    Second, with this inclusion of graphs we can then form the
graph
$$P_{\omega} = \cup_{i< \omega}P_i.$$
Again invoking the argument of Example \ref{P2example}, one can similarly show that  $L_K(P_{\omega})$ is a Loewy ring, and $\ell(L_K(P_{\omega}))=\omega$.

}
\end{example}

\begin{example}\label{uppyramids}
{\rm

We consider here a construction which looks quite similar to that achieved in Example \ref{downpyramids}, but which features an interesting twist.   For each $i\in \N$ we define the graph $Q_i$ as pictured here.

$$\xymatrix{ Q_1: &  {\bullet}^{w_{1,1}} \ar[r]  & {\bullet}^{w_{1,2}} \ar[r]  & {\bullet}^{w_{1,3}} \ar[r]  &
               {\bullet}^{w_{1,4}} \ar@{.>}[r] & }$$

\bigskip

$$\xymatrix{ Q_2: & {\bullet}^{w_{2,1}} \ar[r]  & {\bullet}^{w_{2,2}} \ar[r]  & {\bullet}^{w_{2,3}} \ar[r]  &
               {\bullet}^{w_{2,4}} \ar@{.>}[r] &  \\
            & {\bullet}^{w_{1,1}} \ar[r] \ar[u] & {\bullet}^{w_{1,2}} \ar[r] \ar[ul] & {\bullet}^{w_{1,3}} \ar[r] \ar[ull] & {\bullet}^{w_{1,4}}\ar@{.>}[r] \ar[ulll] & }$$

\medskip

\noindent
For each $i\geq 2$ we construct the graph $Q_{i+1}$ from the graph $Q_i$ by adding vertices $w_{i+1,1},w_{i+1,2},$ $w_{i+1,3},...$ and two sets of edges, as follows.  For each $j\in \N$, we add an edge from $w_{i+1,j}$ to $w_{i+1,j+1}$; and, for each $j\geq 1$, we add a single edge from $w_{i,j}$ to $w_{i+1,1}$.   Clearly for each $i\in \N$ the graph $Q_i$ is isomorphic to the graph $P_i$ of Example \ref{downpyramids}.   In particular,  for each $i\in \N$ we have that $L_K(Q_i)$ is a Loewy ring, and $\ell(L_K(Q_i))=i$.

Here is where the two examples diverge. If we  view $Q_{i}\subseteq Q_{i+1}$ for each $i\in \N$, then we can form the
graph $Q_{\omega} = \cup_{i< \omega}Q_i$.  Unlike its counterpart $P_{\omega}$, the graph $Q_{\omega}$ contains no line points.  In particular, $Soc(L_K(Q_{\omega})) = \{0\}$, so that $S_{\alpha}=\{0\}$ for all $\alpha$.   Thus, unlike its counterpart $L_K(P_{\omega})$, the Leavitt path algebra  $L_K(Q_{\omega})$ is not a Loewy ring.

}
\end{example}


\begin{remark}
{\rm
For an arbitrary ring $R$, dual to the left (resp., right) ascending Loewy socle  series is the corresponding left (resp., right) {\it descending Loewy radical series}.  Briefly, this is defined by setting $R_0$ to be $R$, setting $R_{\alpha +1}$ to be the intersection of all
maximal left (resp., right) submodules of $R_{\alpha }$,  and, for any limit ordinal $\gamma$, setting $R_{\gamma }$ to be $\cap _{\alpha
<\gamma }R_{\alpha }.$   (If $R_{\alpha }$ has no maximal submodules, then set $R_{\alpha }=R_{\alpha +1}$.)  Specifically, $R_{1} = J(R)$.  But
$J(L_K(E))=\{0\}$ for any graph $E$ by Proposition \ref{zeroradicalingeneral}, so that $R_1 = \{0\}$.  Thus the  descending Loewy radical
series is of little interest in the context of Leavitt path algebras.
}
\end{remark}

\section{Leavitt path algebras of arbitrary Loewy length}\label{MainSection}


The goal of this section is to describe in graph-theoretic terms  a necessary and sufficient condition on an arbitrary graph $E$
in order that the corresponding Leavitt path algebra $L_{K}(E)$ is a
Loewy ring; we achieve this characterization in Theorem \ref{Loewyring}.  As a result of this description, we are able to construct, for each ordinal $\lambda$, a graph $E$ with the property that $L_K(E)$ is a Loewy ring of length $\lambda$ (Theorem \ref{realizationtheorem}).

Recall that for a hereditary subset $S$ of vertices of a graph $E$, the set $\overline{S}$ denotes the saturated closure of $S$ in $E^0$.

\begin{definition}\label{Valphadefinition}
{\rm Let $E$ be an arbitrary graph and consider the Leavitt path algebra $L_{K}(E)$.  For each ordinal $\gamma$ we define transfinitely a hereditary saturated
subset $V_{\gamma}$ of $E^{0}$ as follows.

(1) \ $V_{1}$ is the saturated closure of the set of line points of $E.$

\smallskip
\noindent
Suppose $\gamma >1$ is any ordinal
and that the sets $V_{\alpha }$ have been defined for all $\alpha <\gamma $.

\smallskip

(2) \ If $\gamma $ is a limit ordinal, then $V_{\gamma }=\cup _{\alpha
<\gamma }V_{\alpha }$.

(3) \ If $\gamma =\alpha +1$ is a non-limit ordinal, then  $V_{\gamma}=E^{0}\cap I$, where $I$ is the ideal of $L_K(E)$ generated by the set
$$V_{\alpha } \ \ \cup \ \ \{w \in E^{0}\backslash V_{\alpha } \text{ \ : every
bifurcation vertex }u\in T_{E}(w)\backslash V_{\alpha }
\text{ has at most one edge }e$$
$$ \hskip-1in \text{ with }s(e) =u\text{ and }r(e)\text{ }%
\notin V_{\alpha }\}$$
$$\hskip-3.4in \ \cup \ \ \{v-\sum_{\substack{ s(e)=v  \\ %
r(e)\notin V_{\alpha }}}ee^{\ast }\text{ : }v\in B_{V_{\alpha }}\}\text{.}$$
}
\end{definition}

\smallskip

\begin{theorem}\label{Loewyring}
Let $E$ be an arbitrary graph and $K$ any field. For each ordinal $\alpha$
let  $S_{\alpha}$ denote the $\alpha$-th socle of $L_K(E)$, and let $V_{\alpha}$ denote the subset of $E^{0}$ given in Definition \ref{Valphadefinition}.  Then

(1)  $S_\alpha$ is a graded ideal of $L_K(E)$ for each $\alpha$.

(2)  $V_\alpha =  E^0 \cap S_\alpha $ for each $\alpha$.

(3)  $L_K(E)/S_\alpha \cong L(E|V_\alpha)$ as graded $K$-algebras for each $\alpha$.

(4)  $L_{K}(E)$ is a Loewy ring of length $\lambda $ if and only if $\lambda $ is the smallest ordinal such that $E^{0}=V_{\lambda }$.
\end{theorem}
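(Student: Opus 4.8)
The plan is to prove all four statements simultaneously by transfinite induction on $\alpha$, since they are visibly intertwined: part (3) realizes each quotient $L_K(E)/S_\alpha$ as a Leavitt path algebra, part (1) guarantees that Tomforde's theorem applies to pass between $E$ and the quotient graph, and part (2) is the bookkeeping that identifies the abstract ideal $S_\alpha$ with the explicitly-constructed vertex set $V_\alpha$; part (4) then follows at the end as a formal consequence.

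\smallskip

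\emph{Base case $\alpha = 1$.} Here $S_1 = Soc(L_K(E))$, which by Theorem \ref{soclelinepoints} is the ideal $I(L)$ generated by the set $L$ of line points of $E$. Since $L$ is hereditary, Theorem \ref{Tomfordetheorem}(i) (applied to $\overline{L}$, noting $I(L) = I(\overline{L})$) shows $S_1$ is graded, giving (1); and $E^0 \cap S_1 = E^0 \cap I(\overline{L}) = \overline{L} = V_1$ gives (2). For (3), Theorem \ref{Tomfordetheorem}(ii) gives $L_K(E)/S_1 = L_K(E)/I(\overline{L}) \cong L_K(E|\overline{L}) = L_K(E|V_1)$ as graded algebras.

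\smallskip

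\emph{Limit step.} If $\gamma$ is a limit ordinal and the statements hold for all $\alpha < \gamma$, then $S_\gamma = \bigcup_{\alpha < \gamma} S_\alpha$ by Definition \ref{socleseriesdefinition} and $V_\gamma = \bigcup_{\alpha<\gamma} V_\alpha$ by Definition \ref{Valphadefinition}(2); an increasing union of graded ideals is graded, so (1) holds, and $E^0 \cap S_\gamma = \bigcup_{\alpha<\gamma}(E^0\cap S_\alpha) = \bigcup_{\alpha<\gamma}V_\alpha = V_\gamma$ gives (2). For (3) one checks that the compatible system of graded epimorphisms $\phi_\alpha \colon L_K(E) \to L_K(E|V_\alpha)$ induces an isomorphism on the direct limit, and that $L_K(E|V_\gamma) = \varinjlim L_K(E|V_\alpha)$ since $E|V_\gamma$ is the directed union of the graphs $E|V_\alpha$ — this is the routine but slightly delicate point about how the quotient-graph construction interacts with unions of hereditary saturated sets, especially regarding breaking vertices.

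\smallskip

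\emph{Successor step $\gamma = \alpha+1$.} Assume the statements for $\alpha$. Work inside the quotient $\bar R := L_K(E)/S_\alpha$, which by (3) is (graded-)isomorphic to $L_K(E|V_\alpha)$; write $F = E|V_\alpha$. By definition $S_{\alpha+1}/S_\alpha = Soc(\bar R) = Soc(L_K(F))$, which by Theorem \ref{soclelinepoints} is the ideal of $L_K(F)$ generated by the line points of $F$. The crux is to translate "line point of $F$" back into a condition on vertices of $E$: a vertex of $F$ is either a vertex $w \in E^0 \setminus V_\alpha$ or a symbol $v'$ for a breaking vertex $v \in B_{V_\alpha}$; the latter are always sinks in $F$ (noted after Definition \ref{breakingvertices}) and hence line points, accounting for the third set $\{v - \sum ee^* : v \in B_{V_\alpha}\}$ in Definition \ref{Valphadefinition}(3); and $w$ is a line point of $F = E|V_\alpha$ precisely when no vertex of $T_F(w)$ is a bifurcation vertex or a base of a cycle in $F$ — which, unwinding the definition of the quotient graph, is exactly the condition "every bifurcation vertex $u \in T_E(w)\setminus V_\alpha$ has at most one edge $e$ with $s(e)=u$ and $r(e)\notin V_\alpha$" appearing in the second set of Definition \ref{Valphadefinition}(3). (One must be a little careful: a cycle in $F$ through $w$ would correspond to a cycle in $E$ all of whose vertices lie outside $V_\alpha$, but such a cycle would have a bifurcation vertex outside $V_\alpha$ unless it is a "bare" cycle, and a bare cycle contributes no line points either way; the bifurcation condition as stated handles both.) This correspondence shows $Soc(\bar R)$ is generated, as an ideal of $L_K(F)$, by the image under $\phi$ of the set displayed in Definition \ref{Valphadefinition}(3). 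Pulling back along $\phi$: since $Soc(\bar R)$ is generated by a hereditary set together with elements of the breaking-vertex form, it is a graded ideal of $L_K(F)$ by Theorem \ref{Tomfordetheorem}(i),(iii); hence its preimage $S_{\alpha+1}$ in $L_K(E)$ is graded (a graded ideal containing the graded ideal $S_\alpha$ with graded quotient), giving (1). By construction $V_{\alpha+1} = E^0 \cap I$ where $I$ is exactly $S_{\alpha+1}$, giving (2). Finally (3) follows by applying Theorem \ref{Tomfordetheorem}(ii) to $\bar R \cong L_K(F)$ and its graded ideal $Soc(\bar R)$: we get $\bar R / Soc(\bar R) \cong L_K(F | (V_{\alpha+1}\text{-image}))$, and one checks $F|(\cdot) \cong E|V_{\alpha+1}$ as graphs, so $L_K(E)/S_{\alpha+1} \cong L_K(E|V_{\alpha+1})$ as graded algebras.

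\smallskip

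\emph{Deducing (4).} Once (2) is established for all $\alpha$: $L_K(E)$ is a Loewy ring iff $L_K(E) = S_\alpha$ for some $\alpha$, iff $E^0 \cap S_\alpha = E^0$ for some $\alpha$ (using that an ideal of a ring with local units equals the whole ring iff it contains all of $E^0$, as noted in Section \ref{basicsection}), iff $V_\alpha = E^0$ for some $\alpha$. The Loewy length is the least ordinal $\lambda$ with $S_\lambda = S_{\lambda+1}$; by (2) and (3), $S_\lambda = S_{\lambda+1}$ iff $Soc(L_K(E|V_\lambda)) = \{0\}$ iff $E|V_\lambda$ has no line points. If $E^0 = V_\lambda$ then $E|V_\lambda$ is empty and certainly has no line points, so the series stabilizes; conversely one argues that if $E^0 \neq V_\lambda$ then $E|V_\lambda$ is a nonempty graph and — here is the one genuinely nontrivial point — every nonempty graph whose Leavitt path algebra is \emph{not} a Loewy ring still could have empty socle, so we cannot directly conclude $Soc \neq 0$; instead we argue that $S_\lambda = S_{\lambda+1}$ with $V_\lambda \neq E^0$ would mean $V_{\lambda+1} = V_\lambda$, and then $V_\alpha = V_\lambda$ for all $\alpha \geq \lambda$, so $L_K(E)$ is a Loewy ring only if $V_\lambda = E^0$ — contradiction. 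Hence the least $\lambda$ with $E^0 = V_\lambda$ (if it exists) is exactly the Loewy length, and $L_K(E)$ is a Loewy ring precisely in that case.

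\smallskip

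\textbf{Main obstacle.} The heart of the argument — and the step I expect to absorb most of the work — is the successor step's dictionary between line points of the quotient graph $E|V_\alpha$ and the explicitly-described generating set in Definition \ref{Valphadefinition}(3), particularly handling the three separate phenomena (ordinary vertices outside $V_\alpha$, the bifurcation/cycle condition on their trees in the quotient graph, and the breaking vertices which become sinks) in a unified way and checking that the quotient-graph operation composes correctly: $(E|V_\alpha)|(\text{new line points}) \cong E|V_{\alpha+1}$. The limit step's interchange of $\varinjlim$ with the $L_K(E|{-})$ construction is a secondary technical point requiring care about breaking vertices.
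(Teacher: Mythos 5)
Your proposal follows essentially the same route as the paper: a simultaneous transfinite induction, with the base case handled by Theorem \ref{soclelinepoints}, the successor step by translating line points of the quotient graph $E|V_\alpha$ into the generating set of Definition \ref{Valphadefinition}(3) via Theorem \ref{Tomfordetheorem}, the limit step by taking unions, and (3), (4) extracted as formal consequences. One structural difference: at limit stages you prove (3) by interchanging $L_K(E|-)$ with a direct limit, which (as you note) is delicate because the breaking-vertex sets $B_{V_\alpha}$ need not behave monotonically; the paper sidesteps this entirely by proving only (1) and (2) inductively and then deducing (3) for all $\alpha$ at once from Theorem \ref{Tomfordetheorem}(ii)--(iii). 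You should adopt that order of deduction.

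The one place where you diverge in substance is your parenthetical about cycles, and your resolution there does not work as stated. If $T_E(w)\setminus V_\alpha$ contains a cycle none of whose vertices is a bifurcation vertex, then $w$ satisfies the bifurcation condition of Definition \ref{Valphadefinition}(3) vacuously, yet $w$ is not a line point of $E|V_\alpha$; so $\phi(V_{\alpha+1}')$ would strictly contain the set of line points and the identification $I=S_{\alpha+1}$ fails. The single-loop graph already exhibits this at $\alpha=1$: there $V_1=\emptyset$, the displayed condition is vacuous at the unique vertex, so $V_2=E^0$, while $S_2=\{0\}$. Saying that ``the bifurcation condition as stated handles both'' is therefore not a proof; the generating set must also exclude vertices whose tree meets an exit-free cycle (equivalently, one mirrors the full definition of line point), or one must restrict to the acyclic situation, which by Corollary \ref{Loewyimpliesacyclic} is the only one in which $E^0=V_\lambda$ can actually occur. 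To be fair, the paper's own proof passes over exactly this point with ``it is then easy to check,'' so you have detected a real subtlety rather than introduced a new one --- but you have not repaired it.
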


\begin{proof}


We establish statements (1) and (2) simultaneously by transfinite induction.

When $\gamma = 1$, $V_{1}$ has been defined to be the saturated closure of the set of all line
points in $E$; so  $V_1$ is a hereditary saturated subset of $E^0$.   By Theorem \ref{soclelinepoints},
$S_{1} = Soc(L_K(E))$ is the two-sided ideal of $L_K(E)$ generated by $V_{1}=E^{0}\cap S_{1}$, and hence is a graded ideal of $L_K(E)$ by Theorem \ref{Tomfordetheorem}(i).

Now suppose $\gamma > 1$ and, for all $\alpha < \gamma$, we have that $V_{\alpha }=E^{0}\cap S_{\alpha }$ and that $S_{\alpha}$ has been shown to be a graded ideal.    Recall (see Definition \ref{breakingvertices}) that
$E|V_{\alpha }$ is the graph with
$$(E|V_{\alpha })^{0}= (E^{0}\backslash V_{\alpha })\cup B_{V_{\alpha }}^{\prime }
\ \ \  \mbox{ and} \ \ \  (E|V_{\alpha })^{1}=\{e\in E^{1} : r(e)\notin V_{\alpha }\}\cup
\{e^{\prime }:e\in E^{1}:r(e)\in B_{V_{\alpha }}\}.$$
Also, $s(e),r(e)$ are
defined as in $E$ if $e\in E^{1}$, while  $s(e^{\prime })=s(e)$ and $r(e^{\prime })=(r(e))^{\prime }$.
Then by  Theorem \ref{Tomfordetheorem}(ii), there is
an epimorphism $\phi: L_{K}(E)\longrightarrow L_{K}(E|V_{\alpha })$ having  ${\rm ker}(\phi) =S_{\alpha }$, for which  $L_{K}(E)/S_{\alpha }\cong L_{K}(E|V_{\alpha })$.

Suppose $\gamma$ is not a limit ordinal (so that $\gamma = \alpha + 1$ for some $\alpha$), and suppose $V_{\alpha }\neq E^{0}$ (and thus $S_{\alpha }\neq L_{K}(E)$).  Then define
$$V_{\alpha +1}^{\prime } = \{w\in E^{0}\backslash V_{\alpha }:\text{ every
bifurcation vertex \ }u\in T_{E}(w)\backslash V_{\alpha }\text{  has at
most one edge }e $$
$$ \hskip1in \text{ with }s(e)=u\text{ and }r(e)\text{ }\notin V_{\alpha
}\} $$
$$\hskip-3in \cup\ \  \{v-\sum_{\substack{ s(e)=v \\ r(e)\notin V_{\alpha }}}ee^{\ast }\text{
}\text{: } v\in B_{V_{\alpha }}\}\text{. }
$$
From Theorem \ref{Tomfordetheorem}(ii) we have that
$$\phi (v-\sum\limits_{s(e)=v,r(e)\notin V_{\alpha }}ee^{\ast })=v^{\prime }.$$
As noted previously, $v'$ is a sink  and hence a line point in $E|V_{\alpha }$. It is
then easy to check that $\phi (V_{\alpha +1}^{^{\prime }})$ is exactly the set of
line points of $E|V_{\alpha }$. So if $I$ is the ideal of $L_K(E)$  generated by $%
S_{\alpha }\cup V_{\alpha +1}^{\prime }$, then we get that $%
Soc(L_{K}(E|V_{\alpha }))\cong I/S_{\alpha }=Soc(L_{K}(E)/S_{\alpha })$.
Thus $I=S_{\alpha +1}$, the $\alpha +1$-st socle of $L_{K}(E)$. \ Since $%
S_{\alpha }$ and \ $S_{\alpha +1}/S_{\alpha }=Soc(L_{K}(E)/S_{\alpha })$ are
each graded ideals, so is $S_{\alpha +1}$ by a previous observation.  But then $V_{\alpha + 1} = E^0 \cap I$ by construction.


 Thus we have verified (1) and (2)  in  the induction process whenever $\gamma$ is not limit ordinal.

On the other hand, suppose $\gamma $ is a limit ordinal.
Then by definition $V_{\gamma }=\cup _{\alpha <\gamma }V_{\alpha }$, and $S_{\gamma } = \cup
_{\alpha <\gamma }S_{\alpha }$.   Since each $S_\alpha$ is a graded ideal, so is  $S_{\gamma}$.   It is then immediate that $V_{\gamma
}=E^{0}\cap S_{\gamma }$.

  Thus we have established (1) and (2) by transfinite induction.  But then (3) follows from (1) and (2) together with  Theorem \ref{Tomfordetheorem}(ii).   Finally,    (4)  follows immediately from (2), since for any ideal $I$ of $L_K(E)$, $I=L_K(E)$ if and only if $I\cap E^0 = E^0$.
\end{proof}


We can in fact glean from the proof of Theorem \ref{Loewyring} some additional information about  the individual members of the ascending Loewy socle series of a Leavitt path algebra.

\begin{proposition}\label{VNRegular}
Each $S_{\alpha }$ is a von Neumann regular ring.
\end{proposition}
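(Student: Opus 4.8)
The plan is to exploit the structural facts already assembled: by Theorem~\ref{Loewyring}(3), $L_K(E)/S_\alpha \cong L(E|V_\alpha)$ as $K$-algebras, and by Theorem~\ref{Loewyring}(1) each $S_\alpha$ is a \emph{graded} ideal. The key observation is that von Neumann regularity of $S_\alpha$ is governed entirely by the quotient graph structure. Concretely, I would first recall (or cite from \cite{AA3}, where it is shown that $L_K(E)$ is von Neumann regular if and only if $E$ is acyclic) that what matters is the absence of cycles, and then argue that the successive socle layers are constructed precisely so as to absorb all the "acyclic-like" behavior.

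The main step is to show that $S_\alpha$, as a (non-unital) ring with local units, is von Neumann regular. I would proceed by transfinite induction on $\alpha$, mirroring the induction in the proof of Theorem~\ref{Loewyring}. For $\alpha = 1$: $S_1 = Soc(L_K(E))$ is the ideal generated by the line points, and a socle of a semiprime ring (here $J(L_K(E))=\{0\}$ by Proposition~\ref{zeroradicalingeneral}) is a von Neumann regular ring — indeed it is a direct sum of simple left ideals, each of which is, by the computation in the proof of Proposition~\ref{linepoint}, of the form $L_K(E)v$ with $vL_K(E)v \cong K$, so $S_1$ is a direct sum of matrix rings over $K$ (possibly of infinite size / without identity), hence von Neumann regular. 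For the inductive step at a non-limit ordinal $\gamma = \alpha+1$: by construction $S_{\alpha+1}/S_\alpha = Soc(L_K(E)/S_\alpha) = Soc(L(E|V_\alpha))$, which by the $\alpha=1$ argument applied to the graph $E|V_\alpha$ is von Neumann regular. So $S_{\alpha+1}$ is an extension of a von Neumann regular ring $S_\alpha$ by a von Neumann regular ideal-quotient $S_{\alpha+1}/S_\alpha$; I would then invoke the standard fact that if $N \trianglelefteq A$ with $N$ von Neumann regular and $A/N$ von Neumann regular, then $A$ is von Neumann regular. At a limit ordinal $\gamma$, $S_\gamma = \cup_{\alpha<\gamma} S_\alpha$ is a directed union of von Neumann regular rings, and von Neumann regularity is preserved under directed unions (given $x \in S_\gamma$, it lies in some $S_\alpha$, where a quasi-inverse exists).

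The step I expect to be the main obstacle is the extension lemma "$N$ and $A/N$ von Neumann regular $\Rightarrow$ $A$ von Neumann regular" in the \emph{non-unital} setting — the usual textbook statement assumes unitality. I would handle this by using the local units: given $x \in S_{\alpha+1}$, choose a local unit $u$ with $ux=xu=x$; the image $\bar x$ in $(S_{\alpha+1}/S_\alpha)$ has a quasi-inverse, lift it to get $z$ with $x - xzx \in S_\alpha$, then use von Neumann regularity of $S_\alpha$ on the element $u(x-xzx)u \in S_\alpha$ and reassemble — this is the standard argument and goes through verbatim once one inserts local units in the right places, so I would state it as a lemma and keep the proof brief. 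Alternatively, and perhaps more cleanly, since each $S_\alpha$ is a graded ideal of $L_K(E)$ and $L_K(E)/S_\alpha \cong L(E|V_\alpha)$, one could try to realize $S_\alpha$ itself directly as $I(V_\alpha)$ and show this ideal is a direct limit of Leavitt path algebras of acyclic graphs; but the extension argument via local units is the most economical route and is what I would write up.
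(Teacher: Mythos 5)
Your proposal is correct and follows essentially the same route as the paper's proof: transfinite induction in which $S_1$ and each successive layer $S_{\alpha+1}/S_{\alpha}=Soc(L_K(E)/S_{\alpha})\cong Soc(L_K(E|V_{\alpha}))$ is von Neumann regular because the socle of a semiprime ring is a direct sum of (directed unions of) matrix rings, combined with closure of regularity under extensions and ascending unions. The only difference is that the paper asserts the extension lemma without comment on unitality; your caution there is reasonable but not a divergence in approach (the standard quasi-inverse $y+z-yxz-zxy+yxzxy$ for $x$, built from $x-xyx=(x-xyx)z(x-xyx)$, already makes sense without a unit).
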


\begin{proof}
  It is known (see e.g. \cite[pages 65, 90]{J}) that if $R$ is a semiprime ring
(i.e., $R$ has no nonzero nilpotent ideals), then the socle $Soc(R)$ is a
direct sum of simple rings $T_{i}$, each of which is a direct sum of isomorphic
simple left ideals and, moreover, each $T_{i}$ is a directed union of full
matrix rings over division rings. In particular each $T_{i}$, and hence $Soc(R)$, is von Neumann regular. Thus, by Proposition \ref{zeroradicalingeneral}, for the Leavitt path algebra $L_{K}(E)$,
its socle $S_{1}$ is always von Neumann regular. From the proof of Theorem \ref{Loewyring}, we notice the $\alpha$-th socle $S_{\alpha }$ is generated by the
hereditary saturated set $V_{\alpha }$ and that, by Theorem \ref{Tomfordetheorem}(ii),  $L_{K}(E)/S_{\alpha }$ is again a Leavitt path algebra (as it is isomorphic to $L_{K}(E|V_{\alpha })$). This implies that   $L_{K}(E)/S_{\alpha }$ is
semiprime since, by Proposition \ref{zeroradicalingeneral}, $J(L(E|V_{\alpha }))=0$. Consequently, $S_{\alpha +1}/S_{\alpha }=Soc(L_{K}(E)/S_{\alpha })$ is von Neumann regular
for all $\alpha \geq 1$. Since an extension of a von Neumann regular ring by
another von Neumann regular ring is again von Neumann regular, and since von
Neumann regularity survives under ascending unions, we conclude, by
transfinite induction, that each $S_{\alpha }$ is von Neumann regular.

\end{proof}

Recall that   a \emph{$K$-matricial algebra} is a finite direct product of full matrix algebras over $K$, while a \emph{locally $K$-matricial algebra} is a direct limit of $K$-matricial algebras.
As a consequence of Proposition \ref{VNRegular},  \cite[Theorem 1]{AR} then gives

\begin{corol}\label{Loewyimpliesacyclic}
Let $E$ be an arbitrary graph. If $L_{K}(E)$ is a Loewy ring, then
necessarily $E$ must be acyclic and $L_{K}(E)$ must be locally $K$-matricial
(and, in particular, von Neumann regular).
\end{corol}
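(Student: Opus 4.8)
The plan is to deduce Corollary \ref{Loewyimpliesacyclic} from Proposition \ref{VNRegular} together with the cited structure theorem \cite[Theorem 1]{AR}. First I would observe that if $L_K(E)$ is a Loewy ring, then by Definition \ref{socleseriesdefinition} we have $L_K(E) = S_\lambda$ for some ordinal $\lambda$ (indeed $\lambda = \ell(L_K(E))$ works, since once the series stabilizes at a Loewy ring the stable value must equal $L_K(E)$; more carefully, $R = S_\alpha$ for some $\alpha$ by hypothesis, and then $S_\alpha = S_{\alpha+1} = \cdots$, so $\lambda \le \alpha$ and $S_\lambda = S_\alpha = R$). Hence $L_K(E)$ is itself one of the rings $S_\alpha$, and so by Proposition \ref{VNRegular} it is von Neumann regular.

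Next I would invoke \cite[Theorem 1]{AR}, which (as used here) characterizes exactly when a Leavitt path algebra $L_K(E)$ is von Neumann regular, locally $K$-matricial, or acyclic — and asserts these three conditions are equivalent. Since we have just shown $L_K(E)$ is von Neumann regular, that theorem immediately yields that $E$ is acyclic and that $L_K(E)$ is locally $K$-matricial. This is essentially the entire argument: Proposition \ref{VNRegular} supplies the one nontrivial input (von Neumann regularity of each $S_\alpha$, in particular of $L_K(E)$ when it equals some $S_\alpha$), and \cite{AR} does the rest.

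I do not anticipate a genuine obstacle here, since the corollary is a direct citation-plus-Proposition argument; the only point requiring a word of care is the reduction showing that a Loewy ring $R$ actually equals $S_\lambda$ for $\lambda = \ell(R)$, so that Proposition \ref{VNRegular} (which is stated for the members $S_\alpha$ of the socle series) applies to $R$ itself. The parenthetical remark "in particular, von Neumann regular" is of course automatic, since locally $K$-matricial algebras — being direct limits of finite products of full matrix algebras over $K$, each of which is semisimple artinian hence von Neumann regular — are von Neumann regular, von Neumann regularity being preserved under direct limits; but this is also already recorded in the proof of Proposition \ref{VNRegular}.
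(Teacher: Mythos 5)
Your proposal matches the paper's argument exactly: the paper also derives the corollary by noting that a Loewy ring $L_K(E)$ equals some $S_\alpha$, hence is von Neumann regular by Proposition \ref{VNRegular}, and then applies \cite[Theorem 1]{AR} to conclude acyclicity and local $K$-matriciality. Your extra care about why $R=S_\lambda$ for $\lambda=\ell(R)$ is a harmless (and correct) elaboration of a step the paper leaves implicit.
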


We note that an arbitrary Loewy ring $R$ need not be von Neumann
regular even if it is semiprime, as shown for instance in \cite[Example 2.2]{Bac}.

As Example \ref{uppyramids} yields the  acyclic graph $Q_{\omega}$ for which $\ell(L_K(Q_{\omega}))=0$, we see that the converse to Corollary \ref{Loewyimpliesacyclic} does not hold in general.   However,

\begin{corol}
Let $E$ be a graph for which $E^0$ is finite,  and $K$ any field.  The following are equivalent.

(1)  $L_{K}(E)$ is a Loewy ring.

 (2)  $E$ is acyclic.

  (3)  $L_K(E)$ is von Neumann regular.

  \medskip

 If in addition $E^1$ is also finite, then the previous conditions are equivalent to
 
 \smallskip
 
 (4)   $L_K(E)$ is  semisimple artinian.  (In particular, in this case we have  $\ell (L_K(E)) = 1$.)
\end{corol}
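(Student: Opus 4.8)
The plan is to establish the cycle of equivalences $(2)\Rightarrow(1)\Rightarrow(3)\Rightarrow(2)$ for the case $E^0$ finite, and then $(1)\Leftrightarrow(4)$ under the additional hypothesis that $E^1$ is finite. The implications $(1)\Rightarrow(3)$ and $(1)\Rightarrow(2)$ are already essentially in hand: Corollary \ref{Loewyimpliesacyclic} gives that a Loewy $L_K(E)$ forces $E$ acyclic and $L_K(E)$ von Neumann regular, with no finiteness needed. So the real content is the reverse direction $(2)\Rightarrow(1)$, i.e. that $E^0$ finite and $E$ acyclic implies $L_K(E)$ is a Loewy ring, together with $(3)\Rightarrow(2)$.

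For $(2)\Rightarrow(1)$, I would argue by induction on $|E^0|$, using Theorem \ref{Loewyring}. Since $E$ is acyclic and $E^0$ is finite, $E$ has at least one sink (a maximal vertex in the partial order induced by reachability); every sink is trivially a line point, so the set $L$ of line points of $E$ is nonempty, and hence $V_1 \supsetneq \emptyset$. The point is that $V_1$ is a nonempty hereditary saturated set, so by Theorem \ref{Loewyring}(3) the quotient $L_K(E)/S_1 \cong L_K(E|V_1)$, and $E|V_1$ is again a finite acyclic graph (note $B_{V_1}=\emptyset$ since $E^0$ finite means there are no infinite emitters, so $(E|V_1)^0 = E^0\setminus V_1$, which is strictly smaller, and acyclicity is inherited). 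By induction $L_K(E|V_1)$ is a Loewy ring, so its socle series exhausts it after finitely many steps; pulling back through Theorem \ref{Loewyring}(2), we get $V_\lambda = E^0$ for some finite $\lambda$, which is exactly the condition in Theorem \ref{Loewyring}(4) for $L_K(E)$ to be Loewy. The base case $E^0 = \emptyset$ (or a single sink) is immediate from Example \ref{acyclicexample}. The main obstacle, and the step to be careful about, is verifying that $V_1 \neq \emptyset$ strictly — i.e. that acyclicity plus finiteness really does produce a line point, which uses that a finite acyclic graph has a sink and that a sink is a line point; and checking that $E|V_1$ stays finite and acyclic so the induction applies.

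For $(3)\Rightarrow(2)$, I would argue contrapositively: if $E$ is not acyclic then $E$ contains a cycle $c$ based at some vertex $v$, and since $E^0$ is finite I can (after passing to a suitable hereditary saturated quotient, or directly) locate a cycle and produce an element of $L_K(E)$ that is not von Neumann regular. Concretely, one can invoke the known structure theory: if $L_K(E)$ were von Neumann regular then by \cite[Theorem 1]{AR} (as used in Corollary \ref{Loewyimpliesacyclic}) $E$ must be acyclic. Alternatively, a self-contained argument: the cycle $c$ gives $c \in L_K(E)$ with $c^* c = v$ but $c$ not a unit in $vL_K(E)v$ (which, when $T(v)$ behaves like a pure cycle, contains a copy of $K[x,x^{-1}]$, not von Neumann regular), contradicting regularity. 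I would simply cite \cite{AR} here to keep it short. Thus $(1)\Rightarrow(3)\Rightarrow(2)\Rightarrow(1)$ closes the loop.

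Finally, for the equivalence with $(4)$ when both $E^0$ and $E^1$ are finite: $(4)\Rightarrow(1)$ is immediate since a semisimple artinian ring equals its own socle, so $S_1 = L_K(E)$ and $\ell(L_K(E))=1$. For $(2)\Rightarrow(4)$: if $E$ is finite (both $E^0$ and $E^1$) and acyclic, then $L_K(E)$ is a finite direct sum of full matrix algebras over $K$ by \cite[Proposition 3.5]{AAS} (equivalently Example \ref{acyclicexample}), hence semisimple artinian, and then $\ell(L_K(E)) = 1$. Since we have already shown $(1)\Leftrightarrow(2)\Leftrightarrow(3)$, adjoining $(4)$ under the extra finiteness hypothesis is routine and requires no new ideas. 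I expect no genuine obstacle in this last part — it is bookkeeping — so the crux of the whole corollary is the inductive argument for $(2)\Rightarrow(1)$ described in the second paragraph.
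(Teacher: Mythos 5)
Your proposal follows the same route as the paper: Corollary \ref{Loewyimpliesacyclic} for $(1)\Rightarrow(2),(3)$, the sink--line point--induction argument for $(2)\Rightarrow(1)$, the citation of \cite[Theorem 1]{AR} for $(2)\Leftrightarrow(3)$, and \cite[Proposition 3.5]{AAS} for the finite case $(4)$. So in outline you and the authors agree.

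One step in your $(2)\Rightarrow(1)$ argument is wrong as stated, however: you justify $B_{V_1}=\emptyset$ by asserting that ``$E^0$ finite means there are no infinite emitters.'' That is false --- only finiteness of $E^1$ rules out infinite emitters, and in part of the corollary only $E^0$ is assumed finite. Indeed $B_{V_1}$ can be nonempty: take $E^0=\{u,v,w\}$ with $w$ a sink, infinitely many edges from $v$ to $w$, infinitely many from $u$ to $w$, and one edge from $u$ to $v$; then $V_1=\{w\}$ and $u\in B_{V_1}$, so $(E|V_1)^0=\{u,v,u'\}$ has the \emph{same} cardinality as $E^0$ and your induction on $|E^0|$ does not immediately close. (The paper's own proof asserts $|F^0|<|E^0|$ with the same gap.) The conclusion survives because the adjoined vertices $v'$ are sinks, hence line points, in the quotient; a clean repair is to induct instead on the number of vertices that are \emph{not} line points, or to argue globally: since $E^0$ is finite there are only finitely many pairs $(H,S)$ with $H$ hereditary saturated and $S\subseteq B_H$, so by Theorem \ref{Tomfordetheorem}(iii) the ascending chain of graded ideals $S_1\subseteq S_2\subseteq\cdots$ stabilizes at some finite $n$; if $V_n\neq E^0$ then $E|V_n$ is a nonempty acyclic graph with finitely many vertices, hence has a sink, hence a line point, hence $Soc(L_K(E)/S_n)\neq\{0\}$ by Theorems \ref{Loewyring}(3) and \ref{soclelinepoints}, contradicting stabilization. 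The rest of your argument (the $(3)\Rightarrow(2)$ citation and the bookkeeping for $(4)$) is fine and matches the paper.
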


\begin{proof}
(1) implies (2) follows from Corollary \ref{Loewyimpliesacyclic}.  For (2) implies (1), since $E^0$ is finite and $E$ is acyclic then necessarily $E$ contains sinks.  But any sink is  necessarily a line point in $E$.   So $S_1 = Soc(L(E))\neq \{0\}$.  If $S_1 \neq L(E)$ then by Theorem \ref{Tomfordetheorem} the quotient $L(E) / S_1 \cong L(F)$ for some (necessarily acyclic) quotient graph $F$ of $E$, for which $|F^0| < |E^0|$.     An induction argument now gives the result.  The equivalence of (2) and (3) was established in \cite[Theorem 1]{AR}.

In case $E^1$ is also finite, the equivalence of conditions (2), (3), and (4) follows from the fact (see e.g. \cite[Proposition 3.5]{AAS}) that $L_{K}(E)$ in this situation is isomorphic to a finite
direct sum of full matrix rings over $K$.   That  $\ell (L_K(E)) = 1$ in this case then follows directly from (4).
\end{proof}

\section{\protect\bigskip Leavitt path algebras of prescribed Loewy length}\label{prescribedLoewylength}

In this final section we demonstrate, as a key consequence of Theorem \ref{Loewyring}, how
to construct  graphs $E$ for which the corresponding Leavitt
path algebras $L_{K}(E)$ are Loewy rings of arbitrarily  prescribed Loewy
length.

 The idea of the following construction is to build graphs for which the subsets $V_{\lambda}$ are well understood.  Intuitively, we do this by ensuring that for each $\alpha$, the quotient $L_K(E)/S_{\alpha}$ is  isomorphic to the Leavitt path algebra of either the graph  $P_0$ or the graph $P_1$. As noted previously, these two graphs are the prototypical graphs whose Leavitt path algebras are Loewy rings of length 1.   (In fact these two graphs are related to each other: clearly the single vertex of  $P_0$ is a sink in the usual sense, while the graph $P_1$ may naturally be viewed as an {\it infinite sink}.  See e.g. \cite[Definition 1.7]{AAPS}.)

\begin{theorem}\label{realizationtheorem}
For every ordinal $\lambda $ and any field $K$, there is an acyclic graph $P_\lambda$
for which $L_{K}(P_\lambda)$ is a Loewy ring of length $\lambda $.
\end{theorem}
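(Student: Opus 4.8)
The plan is to construct $P_\lambda$ by transfinite recursion on $\lambda$, building on the graphs $P_0$ and $P_1$ of Examples \ref{acyclicexample} and \ref{infinitelineexample} as the base cases ($\ell = 1$), and using the ``pyramid'' constructions of Examples \ref{P2example}--\ref{downpyramids} as the template for how to pass from stage $\alpha$ to stage $\alpha+1$. The governing idea, stated already in the paragraph preceding the theorem, is to arrange that $L_K(P_\lambda)/S_\alpha \cong L_K(P_0)$ or $L_K(P_1)$ for appropriate $\alpha$, so that the socle series is forced to climb one level at a time and terminate exactly at $\lambda$. I would phrase the recursion in terms of the sets $V_\alpha$ of Definition \ref{Valphadefinition}, since Theorem \ref{Loewyring}(4) tells us that $\ell(L_K(P_\lambda)) = \lambda$ is equivalent to: $\lambda$ is the least ordinal with $P_\lambda^0 = V_\lambda$.

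First I would fix, once and for all, a building block. Given a graph $F$ (which will play the role of the ``already constructed'' part, i.e. the graph whose Leavitt algebra is a Loewy ring of some length $\alpha$), I would form a new graph $\widehat{F}$ by adjoining one new ``infinite line'' of vertices $v_1, v_2, v_3, \dots$ with an edge $v_j \to v_{j+1}$ for each $j$, together with, for each $j \geq 1$, a single edge from $v_j$ into a fixed chosen vertex $x_0$ of $F$ (or, if $F$ is empty, one simply takes the line $P_1$ itself). This is exactly the move used in Example \ref{P2example} to go from $P_1$ to $P_2$. The key structural facts I would verify are: (a) $F^0$ is a hereditary saturated subset of $\widehat{F}^0$ — hereditary because the only new edges out of $F$-vertices are internal to $F$, and saturated because each new vertex $v_j$ emits an edge ($v_j \to v_{j+1}$) whose range lies outside $F^0$, so the saturation condition is never triggered at $v_j$; and (b) the quotient graph $\widehat{F}|F^0$ is isomorphic to $P_1$ (the new vertices, with their range-into-$F$ edges deleted, form exactly an infinite line). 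Then by Theorem \ref{Tomfordetheorem}(ii), $L_K(\widehat{F})/I(F^0) \cong L_K(\widehat{F}|F^0) \cong L_K(P_1)$, which has Loewy length $1$; combined with the hypothesis that $L_K(F)$ has Loewy length $\alpha$ and is a Loewy ring, and the fact that $I(F^0)$ is the $\alpha$-th socle of $L_K(\widehat{F})$ (this needs an argument: one checks $F^0$ consists precisely of the vertices in $V_\alpha$ computed inside $\widehat{F}$, using Theorem \ref{Loewyring}(2) and the fact that $S_\alpha$ is graded), one concludes $L_K(\widehat{F})$ is a Loewy ring of length $\alpha + 1$.

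Next I would run the transfinite recursion. Set $P_0$ = the one-vertex no-edge graph (length $1$ — note the mild index clash with the statement, which I would resolve by either reindexing or simply defining $P_1 := P_0$ of Example \ref{acyclicexample} and $P_0$ separately as the empty graph giving length $0$; I'd handle $\lambda = 0$ by the empty graph, whose Leavitt algebra is $\{0\} = S_0$). For a successor $\lambda = \alpha + 1$, set $P_{\alpha+1} := \widehat{P_\alpha}$ using the block above. For a limit ordinal $\lambda$, one has a compatible system of inclusions $P_\alpha \hookrightarrow P_{\alpha'}$ for $\alpha < \alpha' < \lambda$ (each inclusion adds vertices and edges without altering the old subgraph, since $\widehat{\cdot}$ only adjoins material), and I would set $P_\lambda := \bigcup_{\alpha < \lambda} P_\alpha$. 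I would then show $\ell(L_K(P_\lambda)) = \lambda$ by verifying $V_\alpha(P_\lambda) = P_\alpha^0$ for all $\alpha \le \lambda$ via transfinite induction: the successor step reuses the computation from the building-block analysis (the line points of $P_{\alpha+1}|V_\alpha$ are exactly the new ``infinite-line'' vertices plus any breaking-vertex terms, and these saturate to $P_{\alpha+1}^0$), and the limit step is immediate from clause (2) of Definition \ref{Valphadefinition} together with $V_\gamma = \bigcup_{\alpha<\gamma} V_\alpha$. Since each added line of vertices is never exhausted at any stage below $\lambda$ (this is the content of Example \ref{uppyramids}'s cautionary tale — one must make the new edges point the ``right way,'' from each $v_j$ into the \emph{lower} already-built part, not the reverse), $V_\alpha \subsetneq P_\lambda^0$ for every $\alpha < \lambda$, so $\lambda$ is indeed the least ordinal with $V_\lambda = P_\lambda^0$. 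Acyclicity of $P_\lambda$ is clear from the construction (every edge either lies in an infinite line $v_j \to v_{j+1}$ or drops strictly ``down'' a level, so no edge sequence can return to its source). Finally I'd invoke Theorem \ref{Loewyring}(4) to conclude $L_K(P_\lambda)$ is a Loewy ring of length exactly $\lambda$.

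The main obstacle, I expect, is not any single hard computation but rather bookkeeping the identification $V_\alpha(P_\lambda) = P_\alpha^0$ cleanly through the successor step — specifically, confirming that at stage $\alpha+1$ the ideal $I$ appearing in Definition \ref{Valphadefinition}(3) really does have $E^0 \cap I = P_{\alpha+1}^0$. This requires checking three things together: that the ``new line'' vertices become line points (equivalently sinks of sorts) in the quotient graph $P_\lambda | V_\alpha$; that there are no \emph{other} new line points in that quotient one might accidentally create; and that the saturated closure of the line-point set of the quotient, pulled back, is all of $P_{\alpha+1}^0$ and nothing more — in particular that it does not leak into $P_{\alpha+2}^0$. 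The potential-leakage issue is exactly the phenomenon distinguishing $P_\omega$ from $Q_\omega$ in Examples \ref{downpyramids} and \ref{uppyramids}, so I would be careful to orient every inter-level edge so that the vertices at level $\alpha+1$ emit into level $\alpha$ (or lower), guaranteeing level $\alpha+1$ vertices are genuinely not saturated-closure-reachable from level $\le \alpha$. A secondary, purely cosmetic obstacle is the index conflict between ``$P_0$ = length $1$'' in Example \ref{acyclicexample} and ``$P_\lambda$ = length $\lambda$'' in the theorem; I would simply note this and reindex, treating $\lambda = 0$ with the empty graph.
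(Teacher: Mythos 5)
Your construction works for successor-of-successor steps and for limit stages, and up through $\lambda=\omega$ it reproduces the paper's pyramid graphs $P_n$ and $P_\omega$. But there is a genuine gap at successors of \emph{limit} ordinals, and your single uniform building block $\widehat{F}$ cannot close it. Suppose $\alpha$ is a limit ordinal and you form $P_{\alpha+1}=\widehat{P_\alpha}$ by attaching each new line vertex $v_j$ to a fixed vertex $x_0$ of $P_\alpha=\bigcup_{\beta<\alpha}P_\beta$. Then $x_0$ lies in some $P_{\beta_0}$ with $\beta_0<\alpha$, hence $x_0\in V_{\beta_0}$. In the quotient graph $E|V_{\beta_0}$ the edges $v_j\to x_0$ are deleted (their range lies in $V_{\beta_0}$), so each $v_j$ emits only $v_j\to v_{j+1}$ and the new line consists entirely of line points of $E|V_{\beta_0}$. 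The new level is therefore absorbed into $V_{\beta_0+1}$, not into $V_{\alpha+1}$, and one gets $V_\alpha=P_{\alpha+1}^0$, i.e.\ Loewy length $\alpha$ rather than $\alpha+1$. This is not fixable by a cleverer choice of $x_0$: there is no ``top level'' of a limit-stage graph for the new vertices to latch onto. Indeed, the cleanest way to see your construction must fail is Theorem \ref{rowfinitegiveslengthleqomega}: your block adds only vertices of out-degree two and never adds edges out of old vertices, so every $P_\lambda$ you build is row-finite, and a row-finite graph can never have Loewy length exceeding $\omega$.

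The idea you are missing --- and the one essential new ingredient in the paper's proof beyond the pyramid examples --- is that at a stage $\gamma=\alpha+1$ with $\alpha$ a limit ordinal one must adjoin a single new vertex $v_{\alpha+1,1}$ that is an \emph{infinite emitter}, with one edge $e^{\alpha+1}_\beta$ from $v_{\alpha+1,1}$ to $v_{\beta,1}$ for every $\beta<\alpha$, so that its edges reach cofinally into the limit stage. Then for each $\beta<\alpha$ the vertex $v_{\alpha+1,1}$ still emits infinitely many edges with range outside $V_\beta$, hence is a bifurcation vertex of $E|V_\beta$ and not a line point there; only after quotienting by $V_\alpha$ do all its edges vanish, making it a sink (isomorphic quotient $P_0$ rather than $P_1$), so it enters the socle series exactly at step $\alpha+1$. (One also checks it is not a breaking vertex, since \emph{all} of its edges land in $V_\alpha$, and that saturation never applies to it since it is not a regular vertex.) Your successor-of-successor step and your limit step are essentially the paper's, and your discussion of edge orientation and the $P_\omega$ versus $Q_\omega$ phenomenon is on target, but without the infinite-emitter device the recursion cannot pass $\omega$.
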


\begin{proof}
\ For $\lambda =1$, choose $E=P_{1}$, the ``infinite line" graph of
Example \ref{infinitelineexample}.  Observe that $P_{1}$ is acyclic, and has countably many vertices,
each of which is a line point. Thus, in the notation of Theorem \ref{Loewyring}, $V_{1}=P_{1}^{0} = E^0$.

We now utilize an approach similar to the one used in Example \ref{downpyramids} to
construct transfinitely the graphs $P_{\alpha +1}$ from $P_{\alpha}$ for
various ordinals $\alpha$.   This construction agrees with the construction of the graphs $P_n$ of Example \ref{downpyramids} for $n$ finite.


Suppose $\gamma \geq 2$ is any ordinal and assume that the
graphs $P_{\alpha }$ have already been  defined for all $\alpha <\gamma $
in such a way that:

\smallskip

(1)  $P_{\alpha }$ is acyclic,

(2)  $P_{\alpha}$ is a subgraph of $P_{\alpha + 1}$ for all $\alpha + 1 < \gamma$, and

(3)  whenever $\alpha$ is not a limit ordinal,  $P_{\alpha - 1}^0$ (i.e., the vertices of $P_{\alpha - 1}$)
is a hereditary saturated subset of $P_\alpha$, and the quotient graph $P_{\alpha}|P_{\alpha - 1}^0$ is isomorphic as a graph to either the graph $P_0$ or the graph $P_1$ given in Examples \ref{acyclicexample} and \ref{infinitelineexample}.

\smallskip

There are three possibilities for $\gamma $.

\smallskip

First, if $\gamma $ is a limit ordinal, then define
$$P_{\gamma }=\cup _{\alpha
<\gamma }P_{\alpha }.$$

\medskip

Second,  suppose $\gamma =\alpha +1$, where $\alpha $ is a limit ordinal.  By
definition,  $P_{\alpha}$ is the union of $P_{\beta }$, $\beta <\alpha.$   We define the graph $P_{\gamma} = P_{\alpha + 1}$ by introducing new symbols $v_{\alpha + 1,1}$ and $\{e^{\alpha + 1}_{\beta}: \beta < \alpha \}$, and by setting:
$$P_{\gamma}^0 = P_{\alpha }^0 \ \ \cup \ \  \{v_{\alpha + 1,1}\}, \ \ \mbox{and} \ \
 P_{\gamma}^1 = P_{\alpha}^1 \ \ \cup \ \ \{e^{\alpha +1}_{\beta} : \beta <\alpha\}$$
and by defining  $s(e^{\alpha + 1}_{\beta})=v_{\alpha
+1,1}$ and  $r(e^{\alpha + 1}_{\beta})=v_{\beta ,1}$.

\medskip

Pictorially,

$$\xymatrix{ P_{\gamma} = P_{\alpha + 1} = P_{\alpha} \ \cup   &
               &  \\
            & {\bullet}^{v_{\alpha + 1,1}}
            \ar @{=|>} [u]^{\{e^{\alpha + 1}_{\beta}\}}  }$$
where the double arrow indicates multiple edges indexed by $\beta < \alpha$, with $r(e^{\alpha + 1}_{\beta}) = v_{\beta ,1}$ for each $\beta$.

\bigskip

For the third possibility, suppose $\gamma =\alpha +1$, where $\alpha $ is not
a limit ordinal.  We define $P_{\gamma} = P_{\alpha + 1}$ by introducing new symbols    $\{v_{\alpha + 1, n} : n\in \N\}$, $\{e_{\alpha ,n}: n\in \N\}$, and $\{f_{\alpha + 1 ,n} : n\in \N\}$, and by     setting:
$$P_{\gamma}^0 = P_{\alpha }^0 \ \cup \ \{v_{\alpha + 1, n} : n\in \N\}, \ \ \mbox{and}$$
  $$P_{\gamma}^1 = P_{\alpha }^1 \ \cup \ \{e_{\alpha ,n}:n\in \N\} \ \cup \ \  \{f_{\alpha + 1 ,n} : n\in \N\},$$
where $s(e_{\alpha ,n})=v_{\alpha +1,n}$, $r(e_{\alpha,n})=v_{\alpha,1 }$, $s(f_{\alpha + 1,n})=v_{\alpha
+1,n}$, and   $r(f_{\alpha + 1 ,n})=v_{\alpha +1,n+1}$  for all $n \in \N$.

Pictorially,

$$\xymatrix{ P_{\gamma} = P_{\alpha + 1} = P_{\alpha} \ \cup   & &   &  &
               &  \\
            & {\bullet}^{v_{\alpha + 1,1}} \ar[r]_{f_{\alpha+1,1}} \ar[u]^{e_{\alpha,1}} & {\bullet}^{v_{\alpha + 1,2}} \ar[r]_{f_{\alpha+1,2}} \ar[ul]^{e_{\alpha ,2}}  & {\bullet}^{v_{\alpha + 1,3}} \ar[r]_{f_{\alpha+1,3}} \ar[ull]^{e_{\alpha,3}} & {\bullet} \ar[ulll]  \hdots  & }$$
with $r(e_{\alpha ,n}) = v_{\alpha  ,1}$ for all $n\in \N$.

\medskip

By transfinite induction, the graphs $P_{\gamma}$ are now defined for
every ordinal $\gamma$. It is clear from our construction that each $%
P_{\gamma}$ is an acyclic graph.   Moreover, it is also clear from the construction
that each $P_\gamma$ satisfies the indicated conditions (2) and (3) above.   (Again, recall
that the saturated condition applies only for regular vertices; note that in the case where
$\alpha$ is a limit ordinal, the vertex $\{v_{\alpha + 1, 1}\}$ is an infinite emitter,
and thus not a regular vertex.  Note also that in the graphs $P_\alpha$ the ``breaking vertex" sets $B_H$ are empty for all germane subsets $H$.)

\smallskip

Thus for each $\lambda$ the graph $P_\lambda$ has the property that $V_\lambda = P_\lambda^0$, and so $P_\lambda$ is a graph of the desired type by Theorem \ref{Loewyring}(4).

\end{proof}

Recall that in Section \ref{ExamplesSection}  we constructed, for each $n\leq \omega $, a row-finite graph $P_{n}$ for which $L(P_{n})$ is a Loewy ring of length $n$. We finish this article by showing that, in the row-finite case, length $\omega$ is the maximum possible.

\begin{theorem}\label{rowfinitegiveslengthleqomega}
If $E$ is a row-finite graph, then $L_{K}(E)$
must  have Loewy length $\leq \omega $.
\end{theorem}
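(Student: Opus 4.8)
The plan is to argue at the level of the subsets $V_\alpha\subseteq E^0$ of Definition \ref{Valphadefinition}, using Theorem \ref{Loewyring}. Since $L_K(E)/S_\omega\cong L_K(E|V_\omega)$ and, by Theorem \ref{soclelinepoints}, the socle of any Leavitt path algebra is the ideal generated by its line points, it is enough to prove that the graph $F:=E|V_\omega$ has no line points: then $Soc(L_K(E)/S_\omega)=\{0\}$, so $S_{\omega+1}=S_\omega$ and $\ell(L_K(E))\le\omega$. Here I use that a row-finite $E$ has no infinite emitters, so $B_H=\emptyset$ for every hereditary saturated $H$; hence $E|H$ is simply the restriction of $E$ to $E^0\setminus H$, and in particular $F$ is again row-finite.

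The first step is to eliminate the easy configurations. I would show that $F$ has no sinks: a sink $u$ of $F$ is, viewed in $E$, either a sink of $E$ --- hence a line point of $E$, so $u\in V_1\subseteq V_\omega$, contradicting $u\in F^0$ --- or a regular vertex of $E$ all of whose finitely many edges have range in $V_\omega=\bigcup_{n<\omega}V_n$; those ranges then already lie in some single $V_n$, so $u\in V_n\subseteq V_\omega$ by saturation, again impossible. Consequently, if $w$ were a line point of $F$, the tree $T_F(w)$ would be an infinite line $w=w_0\to w_1\to w_2\to\cdots$; and since each $w_i$ is again a line point of $F$, the edge $w_i\to w_{i+1}$ is the only edge of $E$ leaving $w_i$ with range outside $V_\omega$, every other (again finitely many) edge out of $w_i$ having range in $V_\omega$.

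The core of the proof --- and the step I expect to be the main obstacle --- is to contradict the existence of such an infinite line. The aim is to show that $w_0$ already belongs to $V_n$ for some finite $n$, contradicting $w_0\notin V_\omega$. For each $i$ the finitely many off-line edges leaving $w_i$ have their ranges in $V_\omega=\bigcup_n V_n$, hence all in $V_{n_i}$ for some finite $n_i$; the crux is to secure a single finite $n$ that simultaneously absorbs all of them, for then in $E|V_n$ the line $\{w_i\}$ is bifurcation-free, so $w_0$ is a line point of $E|V_n$ and thus $w_0\in V_{n+1}\subseteq V_\omega$. I would try to extract such a uniform $n$ from a K\"onig-type compactness argument applied to the (finitely branching, because row-finite) forward subgraph of $E$ issuing from $w_0$, using it to control how the side-branches hanging off $\{w_i\}$ are distributed among the $V_n$ and pushing the ``capture'' of the line back from arbitrarily far out along it. If a direct uniform bound is not available, the fallback is to run the transfinite induction used to prove Theorem \ref{Loewyring} along the line itself, arguing that the tail $w_i,w_{i+1},\dots$ is absorbed into the socle series one stage at a time and that row-finiteness forces this to be completed by stage $\omega$, so that $w_0\in V_\omega$ after all. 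Either way, the resulting contradiction shows that $F$ has no line points, which by the reduction above yields $\ell(L_K(E))\le\omega$; together with the graphs $P_n$ ($n\le\omega$) of Section \ref{ExamplesSection}, this gives that the Loewy length of a row-finite $L_K(E)$ is finite or exactly $\omega$.
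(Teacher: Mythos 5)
Your reduction is sound and matches the paper's strategy up to a point: it does suffice to show that $F=E|V_\omega$ has no line points, your no-sinks argument via row-finiteness and saturation is correct, and you have correctly isolated where the difficulty lies. But the proof is not complete, and the specific plan you offer for the crux cannot work. In general there is \emph{no} single finite $n$ that simultaneously absorbs all the side branches hanging off the infinite line $w_0\to w_1\to\cdots$: the least $n_i$ for which the ranges of the side edges of $w_i$ all lie in $V_{n_i}$ can be unbounded in $i$ (for instance, attach to each $w_i$ a single edge into the bottom row of a copy of the pyramid $P_i$ of Example~\ref{downpyramids}, whose bottom-row vertices enter the series only at stage $i$). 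K\"onig's lemma produces an infinite branch in an infinite finitely branching tree; it does not produce a uniform bound for an infinite family of finite sets, so it cannot supply the $n$ you want. The ``fallback'' of rerunning the transfinite induction along the line is not an argument as stated.

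The paper avoids asking for a uniform bound by localizing the contradiction at a \emph{single} bifurcation vertex rather than at $w_0$: given $v$ with $T_{E}(v)\setminus V_\omega$ a line in $E|V_\omega$, and a bifurcation vertex $u_i\in T_E(v)\setminus V_\omega$, only the finitely many edges of that one vertex $u_i$ whose ranges lie in $V_\omega$ need to be pushed into a single $V_m$; the paper then asserts that $u_i$ is a line point of $E|V_m$, whence $u_i\in V_{m+1}\subseteq V_\omega$, contradicting $u_i\notin V_\omega$. You should note, however, that this assertion still requires knowing that the vertices of $T_{E|V_m}(u_i)$ strictly downstream of $u_i$ do not bifurcate in $E|V_m$ --- and those vertices may have side edges with ranges in $V_\omega\setminus V_m$, which is exactly the uniformity issue you flagged reappearing one step later. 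So you have put your finger on the genuine obstacle in this theorem, and your instinct that this is ``the main obstacle'' is accurate; but identifying the obstacle is not the same as overcoming it, and as written your proposal does not contain a proof of the statement.
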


\begin{proof}
Suppose, by way of contradiction, that $L_{K}(E)$ has length $>\omega $.
Let $S_{\omega }$ be the $\omega $-th socle of $L_{K}(E)$. By Theorem \ref{Tomfordetheorem}(iii),  $S_{\omega }$ is the ideal of $L_K(E)$
generated by $V_{\omega }=\cup _{n<\omega}V_{n}$.  For the same reason,   $S_{\omega + 1}$ is the
ideal of $L_K(E)$ generated by $V_{\omega +1}=V_{\omega }\cup Z$, where, in   general, $Z$ is the union of two subsets of vertices (recall Definition \ref{Valphadefinition}).  However, since $E$ is row-finite, the second of these two subsets is empty, so that here we have
$$
Z=\{v\in E^{0}\backslash V_{\omega }\text{ : every bifurcation vertex }u\in
T_{E}(v)\backslash V_{\omega }\text{ has at most one edge }e\in E^{1}$$
$$\hskip-2in \text{ with
}s(e)=u\text{ and }r(e)\notin V_{\omega}\}\text{.}
$$

\smallskip
\noindent
Let $v\in Z$ and let $\{u_{i}:u_{i}\in T_{E}(v)\backslash V_{\omega },i\in
X\}$ be the set of all bifurcation vertices in $T_{E}(v)\backslash V_{\omega
}$. For a given $i\in X$, let $s^{-1}(u_{i})=\{e_{i_1},...,e_{i_{k_{i}}}\}$, and let
$$J_i=\{e_{i_j} \in s^{-1}(u_{i}) :r(e_{i_j})\in V_{\omega }\}.$$
 Note that, by the conditions on $v\in Z$,  $|J_i|=k_{i-1}$ or $k_{i}$. Since $V_{\omega }$ is the union of the ascending
chain $V_{1}\subset V_{2}\subset \hdots$, there is a positive integer $m$
such that $r(J_i) \subseteq V_{m}$.  Thus $u_{i}$
is a line point in $E|V_{m}$.  But  by Theorem \ref{Loewyring}(3) we have $L_{K}(E)/S_{m}\cong L_{K}(E|V_{m})$, and so $u_{i}$ maps into
the socle of $L_{K}(E)/S_{m}$. This means that
$$u_{i} \in S_{m+1}\cap
E^{0}=V_{m+1}\subset V_{\omega }.$$ This contradicts the fact that $u_{i}\in
T_{E}(v)\backslash V_{\omega }$. Hence $L_{K}(E)$ must have length $\leq
\omega $.
\end{proof}

\end{document}